\theoremstyle{change}
\newtheorem{definition}[equation]{Definition}
\newtheorem{thm}[equation]{Theorem}
\newtheorem{theorem}[equation]{Theorem}
\newtheorem{prop}[equation]{Proposition}
\newtheorem{proposition}[equation]{Proposition}
\newtheorem{lemma}[equation]{Lemma}
\newtheorem{cor}[equation]{Corollary}
\theoremstyle{nonumberplain}
\newtheorem{proof}{Proof}
\numberwithin{equation}{section}
\tikzset{dot/.style={circle,draw,fill,inner sep=1pt}}
\newcolumntype{M}[1]{>{\centering\arraybackslash}m{#1}}
\newcolumntype{P}[1]{>{\centering\arraybackslash}p{#1}}
\newcommand\setof[1]{\{ #1 \}}
\newcommand\abs[1]{ \mid #1 \mid }
\newcommand{\tpitchfork}{%
  \vbox{
    \baselineskip\z@skip
    \lineskip-.52ex
    \lineskiplimit\maxdimen
    \m@th
    \ialign{##\crcr\hidewidth\smash{$-$}\hidewidth\crcr$\pitchfork$\crcr}
  }%
}
\def\doAlgSec{0}
\title{Topological K-Theory for Hilbert Scheme Analogs}
\date{}
\author{Ammar Husain}
\begin{document}
\maketitle

\tikzset{->-/.style={decoration={
  markings,
  mark=at position #1 with {\arrow{>}}},postaction={decorate}}}
\tikzset{-<-/.style={decoration={
  markings,
  mark=at position #1 with {\arrow{<}}},postaction={decorate}}}

\begin{abstract}
In geometric representation theory, it is common to compute equivariant $K$ theory of schemes like $Hilb^n ( \mathbb{A}^2 )$ or $Hilb^n (X)$ for an ALE resolution $X \to \mathbb{A}^2 / \Gamma$. If we abandon the algebraic nature and just look at this homotopically we see close relatives of $BS_n$ and $B(\Gamma \wr S_n)$. Therefore we compute the topological K theory of these classifying spaces to fill in a small gap in the literature.
\end{abstract}

\section{Introduction}

In the seminal paper of Atiyah and Segal \cite{AtiyahSegal}, they show that $K^\bullet (BG)$ for a finite group is isomorphic to the representation ring of $G$ completed at the augmentation ideal. Here we compute some examples. In particular at the sequences that come from Platonic groups, Weyl groups and groups associated to Hilbert schemes of du Val singularities. We can construct interesting generating functions in the cases when the groups come in countable families. In some of these cases this can be interpreted as replacing the usual algebraic $K^0$ and genuine equivariance with the much simpler Borel equivariant topological K theory. This can be a very pale shadow of the Platonic ideal.

\section{K Theory of discrete BG}

\begin{thm}[\cite{Luck} L{\"u}ck]

For finite groups $G$\footnote{Parenthesis are used to distinguish p-adic vs cyclic groups.}

\begin{eqnarray*}
K^0 (BG) &=& \mathbb{Z} \times \prod_{p} (\mathbb{Z}_{(p)})^{r(p,G)}\\
K^1 (BG) &=& 0\\
\end{eqnarray*}
where $r(p,G)$ is the number of conjugacy classes $C$ such that $g \in C$ will have order $p^d$ for some $d \geq 1$. Similarly define $\tilde{r}(p,G) = r(p,G) +1$ with $d=0$ also allowed.

\end{thm}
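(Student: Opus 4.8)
The plan is to reduce the computation to the Atiyah--Segal completion theorem and then to an essentially arithmetic analysis of the completed representation ring. First I would write $K^\bullet(BG) = K^\bullet_G(EG)$ and invoke \cite{AtiyahSegal}: because $K^\bullet_G(\mathrm{pt}) = R(G)$ is concentrated in even degree and the tower $\{R(G)/I^n\}$ has surjective transition maps (so $\varprojlim^1$ vanishes), one obtains $K^1(BG)=0$ immediately, together with $K^0(BG)\cong \widehat{R(G)}_I$, the completion at the augmentation ideal $I=\ker\!\big(\epsilon\colon R(G)\to\mathbb{Z}\big)$. This disposes of the odd part and turns the theorem into the purely algebraic identification of $\widehat{R(G)}_I$.

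Next I would peel off the unreduced summand. Since $\epsilon$ is split by the trivial representation, $R(G)=\mathbb{Z}\oplus I$, hence $\widehat{R(G)}_I=\mathbb{Z}\oplus\widehat I$ with $\widehat I=\varprojlim_n I/I^n$; the $\mathbb{Z}$ accounts for the global factor and $\widehat I=\widetilde K^0(BG)$ is what remains. Here I would use the finiteness of $G$: because $R(G)\otimes\mathbb{Q}$ is a product of fields, $I$ is rationally idempotent, so the graded pieces $I^n/I^{n+1}$ are \emph{finite} abelian groups (in contrast to a polynomial ring, where the augmentation completion is torsion-free of infinite rank). Consequently $\widehat I$ is profinite and splits canonically as a product $\prod_p \widehat I_p$ over primes, with each $\widehat I_p$ a finitely generated module over the $p$-adic integers $\mathbb{Z}_{(p)}$.

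It then remains to compute each local rank. I would pass to $\widehat I_p\otimes\mathbb{Q}$ over the fraction field $\mathbb{Q}_p$ of $\mathbb{Z}_{(p)}$ and decompose $R(G)\otimes\mathbb{Q}_p$ into its field components, indexed by the Galois orbits over $\mathbb{Q}_p$ of the conjugacy classes of $G$, the total $\mathbb{Q}_p$-dimension being the number of classes. The key point is that the $I$-adic completion, localized at $p$, retains exactly those components whose evaluating element $g$ is $p$-adically infinitesimally close to $1$ -- that is, the components indexed by classes of elements of $p$-power order, since a root of unity of order $p^d$ is congruent to $1$ modulo the prime above $p$, whereas any nontrivial $p'$-part keeps $g$ at positive distance from $1$. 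Summing dimensions over those Galois orbits gives $\tilde r(p,G)$, and deleting the identity class (already absorbed into the $\mathbb{Z}$) leaves $\operatorname{rank}_{\mathbb{Z}_{(p)}}\widehat I_p = r(p,G)$; assembling over all $p$ produces $\mathbb{Z}\times\prod_p(\mathbb{Z}_{(p)})^{r(p,G)}$.

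I expect the genuine obstacle to be this last paragraph, and specifically two refinements of it. First, one must justify rigorously -- via Segal's description of $\mathrm{Spec}\,R(G)$ and a Nakayama-type separation of idempotents -- that the $I$-adic completion really does discard the components indexed by classes with nontrivial $p'$-part, rather than merely that they are rationally orthogonal. Second, one must prove that $\widehat I_p$ is \emph{torsion-free}, so that the rank computed over $\mathbb{Q}_p$ upgrades to a genuine isomorphism $\widehat I_p\cong(\mathbb{Z}_{(p)})^{r(p,G)}$ with no finite $p$-group summand. The Atiyah--Segal reduction and the splitting of $\epsilon$ are formal; the substance of the proof is this $p$-adic bookkeeping on the spectrum of the representation ring.
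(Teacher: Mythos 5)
The paper offers no proof of this statement at all---it is quoted from L\"uck and ultimately rests on Atiyah--Segal---so your proposal can only be measured against the content of the cited result. The skeleton you build is the standard and correct one: Atiyah--Segal gives $K^1(BG)=0$ and $K^0(BG)\cong \widehat{R(G)}_I$; the augmentation splits off the $\mathbb{Z}$; and your observation that $I\otimes\mathbb{Q}$ is generated by an idempotent (because $R(G)\otimes\mathbb{Q}$ is a reduced finite-dimensional $\mathbb{Q}$-algebra, hence a product of fields, with the augmentation as one factor) correctly forces each $I^n/I^{n+1}$ to be finite, so that $\widehat{I}$ is profinite and splits as $\prod_p \widehat{I}_p$ with each factor a finitely generated $\mathbb{Z}_{(p)}$-module (finite generation because $I$ is generated by $c-1$ elements, bounding the topological generators of each pro-$p$ piece). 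All of that is sound.

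The gap is that the two items you defer to your final paragraph are not refinements of the argument; they \emph{are} the theorem. What you have actually established is only $K^0(BG)\cong\mathbb{Z}\times\prod_p M_p$ with $M_p$ a finitely generated $\mathbb{Z}_{(p)}$-module: neither the rank $r(p,G)$ nor the absence of a torsion summand is proved, and the root-of-unity heuristic does not by itself show which idempotents of $R(G)\otimes\mathbb{Q}_p$ survive $I$-adic completion. Both points are closed by the same device, which your proposal never invokes: restriction to cyclic subgroups. Since $R(G)\to\prod_C R(C)$ (product over cyclic $C\le G$) is injective and $R(G)$ is Noetherian with each $R(C)$ finite over it, $I(G)$-adic completion preserves this injection; the Atiyah--Segal comparison of the $I(G)$-adic and $I(C)$-adic topologies on $R(C)$ then identifies the target with $\prod_C\widehat{R(C)}_{I(C)}$. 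For $C$ cyclic of order $n$ the completion of $\mathbb{Z}[t]/(t^n-1)$ is computed explicitly through the cyclotomic factors $\mathbb{Z}[\zeta_{p^k}]$: the factors with $d\mid n$ not a prime power die, and what survives is $\mathbb{Z}$ times, for each $p$, a free $\mathbb{Z}_{(p)}$-module of rank $p^{\nu_p(n)}-1$, the number of elements of nontrivial $p$-power order. Injectivity into this visibly torsion-free target gives torsion-freeness of $\widehat{I}_p$ for general $G$, and the congruence $\chi(g)\equiv\chi(g_{p'})$ modulo a prime above $p$ (equivalently, Segal's description of $\operatorname{Spec}R(G)$) makes your ``infinitesimally close to $1$'' statement precise and yields the rank count $r(p,G)$. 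Without this reduction, or an equivalent substitute, the proposal stops one genuine step short of the statement.
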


\begin{definition}[Rank Generating Functions]
If there is a sequence of finite groups $G_n$ with $n \in \mathbb{N}$, define two generating functions as

\begin{eqnarray*}
OGF(p,G_\bullet , x) &=& \sum r(p,G_n) x^n\\
\tilde{OGF}(p,G_\bullet , x) &=& \sum \tilde{r}(p,G_n) x^n\\
\tilde{OGF}(p,G_\bullet , x) &=& OGF(p,G_\bullet , x) + \frac{1}{1-x}\\
\end{eqnarray*}

\end{definition}

\section{Platonic Groups \label{Platonic}}

For finite subgroups $G \subset SL(2,\mathbb{C})$, there are few choices. In this section, we describe their $K^0 (BG)$. The cohomologies of these groups are described in \cite{Tomoda}.

\begin{lemma}
For the sequence of cyclic groups $Cyc_n = \mathbb{Z}_{n+1}$

\begin{eqnarray*}
\tilde{OGF}(p,Cyc , x) &=& \sum_{n =0}^\infty p^{\nu_p (n+1)} x^n\\
&=& \sum_{n =0}^\infty \frac{x^n}{\abs{n+1}_p}\\
\end{eqnarray*}

For the sequence of binary cyclic groups $BinCyc_n = \mathbb{Z}_{2(n+1)}$

\begin{eqnarray*}
\tilde{OGF}(p,BinCyc , x) &=& \sum_{n =0}^\infty p^{\nu_p (2(n+1))} x^n\\
&=& \sum_{n =0}^\infty \frac{x^n}{\abs{2(n+1)}_p}\\
\end{eqnarray*}

\end{lemma}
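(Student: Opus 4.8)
The plan is to reduce both statements to a single count of elements of prime-power order in a cyclic group, exploiting the fact that these groups are abelian. First I would unwind the definition: by the convention preceding the statement, $\tilde{r}(p,G)$ counts the conjugacy classes $C$ all of whose elements have order $p^d$ for some $d \geq 0$. Since $Cyc_n = \mathbb{Z}_{n+1}$ and $BinCyc_n = \mathbb{Z}_{2(n+1)}$ are abelian, every conjugacy class is a singleton, so $\tilde{r}(p,G)$ is simply the number of elements of $G$ whose order is a power of $p$, with the identity contributing the $d=0$ term.

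The key step is to identify this set of elements. In any finite abelian group the elements of $p$-power order form exactly the Sylow $p$-subgroup, and for $\mathbb{Z}_m$ this is the unique cyclic subgroup of order $p^{\nu_p(m)}$, where $\nu_p$ denotes the $p$-adic valuation. Hence $\tilde{r}(p,\mathbb{Z}_m) = p^{\nu_p(m)}$. Specializing to $m = n+1$ and to $m = 2(n+1)$ yields $\tilde{r}(p,Cyc_n) = p^{\nu_p(n+1)}$ and $\tilde{r}(p,BinCyc_n) = p^{\nu_p(2(n+1))}$ respectively.

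Assembling these coefficients term by term into the ordinary generating function gives the first equality in each display at once. For the second equality I would recall that the normalized $p$-adic absolute value satisfies $\abs{k}_p = p^{-\nu_p(k)}$, so that $p^{\nu_p(k)} = 1/\abs{k}_p$; substituting $k = n+1$ and $k = 2(n+1)$ rewrites each summand into the claimed $p$-adic form and closes the argument.

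There is no substantial obstacle here: the computation is essentially immediate once the Sylow subgroup is identified. The only point demanding care is the bookkeeping of the identity element, that is, the $d=0$ convention distinguishing $\tilde{r}$ from $r$. It is precisely this convention that makes the count equal the full order $p^{\nu_p(m)}$ of the Sylow $p$-subgroup rather than $p^{\nu_p(m)} - 1$, and overlooking it would shift every coefficient by one, which is exactly the discrepancy recorded by the $\frac{1}{1-x}$ term relating $\tilde{OGF}$ to $OGF$ in the preceding definition.
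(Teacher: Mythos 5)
Your proof is correct, and it reaches the same reduction as the paper --- both arguments use that the cyclic groups are abelian, so conjugacy classes are elements and $\tilde{r}(p,\mathbb{Z}_m)$ is the number of elements of $p$-power order --- but the final count is executed by a different key fact. The paper stratifies by exact order: the number of elements of order exactly $p^r$ in $\mathbb{Z}_m$ is $\phi(p^r)$ when $p^r \mid m$, and the sum $\sum_{r=1}^{\nu_p(m)} \phi(p^r)$ telescopes to $p^{\nu_p(m)}-1$, giving $r$ and hence $\tilde{r}$. You instead identify the full set of $p$-power-order elements as the Sylow $p$-subgroup, whose order is $p^{\nu_p(m)}$, which lands on $\tilde{r}$ in one step and skips the totient bookkeeping entirely. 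Your route is slightly more direct and generalizes verbatim to any finite abelian group (the $p$-primary component), while the paper's totient decomposition has the minor advantage of exhibiting the contribution of each exact order $p^r$ separately, which is the shape of argument reused later for the dihedral classes. Your closing remark about the $d=0$ convention being exactly the $\frac{1}{1-x}$ discrepancy between $\tilde{OGF}$ and $OGF$ matches the last two lines of the paper's computation. No gaps.
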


\begin{proof}

\begin{eqnarray*}
\sum_{n} r(p,\mathbb{Z}_{n+1}) x^n &=& \sum_{n} \sum_{r=1} \phi (p^r) \delta_{p^r \mid (n+1)} x^n\\
&=& \sum_{n} \sum_{r=1}^{\nu_p (n+1)} \phi (p^r) x^n\\
\sum_{d \mid n+1} \phi (d) &=& n+1\\
\sum_{n=0}^\infty r(p,G_n) x^n &=& \sum_{n} ( p^{\nu_p (n+1)} - 1) x^n\\
&=& \sum_{n=0}^\infty (\frac{1}{\abs{n+1}_p} - 1) x^n\\
\end{eqnarray*}
where $\nu_p (n)$ is the p-adic valuation that indicates the highest power dividing $n$. $\abs{n}_p = p^{-\nu_p (n)}$ is the p-adic norm.\footnote{For $p=2$, this is \cite{OEIS006519}}.

\end{proof}

\begin{cor}[Product of Cyclic Groups \label{cor:TwoCyclics}]
For $\mathbb{Z}_{n+1} \times \mathbb{Z}_{m+1}$ (which is realized in $GL(2,\mathbb{C})$ instead of $SL(2,\mathbb{C})$) we have

\begin{eqnarray*}
\tilde{OGF}(p,Cyc \times Cyc , x,y) &=& \sum_{n =0}^\infty \sum_{m=0}^{\infty} p^{\nu_p (n+1)} p^{\nu_p (m+1)} x^n y^m\\
&=& \sum_{n=0}^\infty \sum_{m =0}^\infty \frac{x^n y^m}{\abs{n+1}_p \abs{m+1}_p}\\
\end{eqnarray*}

In particular if the sequence of groups $\mathbb{Z}_{n+1}^2$ we get

\begin{eqnarray*}
\tilde{OGF}(p,Cyc \times Cyc , x) &=& \sum_{n=0}^\infty \frac{x^n}{\abs{n+1}_p^2}\\
\end{eqnarray*}
in analogy with the dilogarithm.

\end{cor}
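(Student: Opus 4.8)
The plan is to derive the two-variable formula directly from the single-variable Lemma, the bridge being the multiplicativity of $\tilde{r}(p,-)$ under direct products. First I would record two structural facts about a direct product $G \times H$ of finite groups: the conjugacy class of an element $(g,h)$ is the product of the conjugacy class of $g$ in $G$ with that of $h$ in $H$, so conjugacy classes of $G \times H$ biject with pairs of conjugacy classes; and the order of $(g,h)$ is $\mathrm{lcm}(|g|,|h|)$, so $(g,h)$ has order $p^d$ for some $d \geq 0$ if and only if both $g$ and $h$ do. Counting the conjugacy classes all of whose elements have $p$-power order (with $d=0$ allowed, i.e.\ including the identity) therefore factors, giving $\tilde{r}(p, G \times H) = \tilde{r}(p,G)\,\tilde{r}(p,H)$.

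For the cyclic factors this is especially transparent: the groups are abelian, so conjugacy classes are singletons, and the elements of $p$-power order in $\mathbb{Z}_{n+1}$ are exactly its Sylow $p$-subgroup, of cardinality $p^{\nu_p(n+1)}$. This is precisely the content of the preceding Lemma, $\tilde{r}(p,\mathbb{Z}_{n+1}) = p^{\nu_p(n+1)}$. Applying multiplicativity then yields $\tilde{r}(p, \mathbb{Z}_{n+1}\times \mathbb{Z}_{m+1}) = p^{\nu_p(n+1)}\,p^{\nu_p(m+1)}$, which is the claimed coefficient.

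Packaging these coefficients into the double generating function, I would observe that the sum factors as a product of two copies of the single-variable $\tilde{OGF}(p,Cyc,-)$, one in $x$ and one in $y$, since $\sum_{n,m} a_n b_m x^n y^m = (\sum_n a_n x^n)(\sum_m b_m y^m)$. This produces the stated closed form $\sum_{n,m} x^n y^m /(\abs{n+1}_p \abs{m+1}_p)$. Restricting to the diagonal sequence $\mathbb{Z}_{n+1}^2$, i.e.\ setting $m=n$, collapses this to $\sum_n p^{2\nu_p(n+1)} x^n = \sum_n x^n/\abs{n+1}_p^2$, the advertised dilogarithm-type series.

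The step requiring the most care is the multiplicativity itself, and specifically the role of the identity element. The count $\tilde{r}$ is multiplicative precisely because it admits the trivial order $d=0$; the un-tilded $r$, which excludes the identity, does not factor, since in fact $r(p, G\times H) = \tilde{r}(p,G)\,\tilde{r}(p,H) - 1$. This is the conceptual reason the corollary is naturally phrased for $\tilde{OGF}$ rather than $OGF$: only the completed count converts a direct product of groups into an honest product of generating functions, which is what makes the bivariate and diagonal formulas clean.
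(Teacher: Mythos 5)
Your argument is correct and is exactly the reasoning the paper leaves implicit (the corollary is stated without proof): conjugacy classes of a direct product are pairs of classes, $p$-power order of $(g,h)$ is equivalent to $p$-power order of both factors, so $\tilde{r}$ is multiplicative and the generating function factors. Your closing observation --- that multiplicativity holds for $\tilde{r}$ but not for $r$, which is why the statement is phrased for $\tilde{OGF}$ --- and your care in reading the diagonal case as the sequence $\mathbb{Z}_{n+1}^2$ rather than the substitution $y=x$ are both correct and worth having on record.
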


\begin{proposition}[Dihedral]
\begin{eqnarray*}
\tilde{OGF}(2,BinDih_{\bullet+1},x) &=& \sum_{n=0}^\infty (2 + \frac{1}{2 \abs{2n+2}_2}) x^n\\
&=& \frac{2}{1-x} + \frac{1}{2} \tilde{OGF} (p,BinCyc,x)\\
\tilde{OGF}(p,BinDih_{\bullet + 1},x) &=& \sum_{n=0}^\infty 1 + \frac{1}{2}( \frac{1}{\abs{2n+2}_p} - 1) x^n\\
&=& \frac{1}{2} \frac{1}{1-x} + \frac{1}{2} \tilde{OGF} ( p,BinCyc,x)\\
\end{eqnarray*}
\end{proposition}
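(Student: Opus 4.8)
The plan is to repeat, for the binary dihedral (dicyclic) group $BinDih_{n+1}$ of order $4(n+1)$, exactly the conjugacy-class bookkeeping that produced the cyclic and binary-cyclic generating functions. First I would fix the presentation $\langle a,b \mid a^{2(n+1)}=1,\ b^2=a^{n+1},\ bab^{-1}=a^{-1}\rangle$, so that the cyclic subgroup $\langle a\rangle \cong \mathbb{Z}_{2(n+1)}$ is literally $BinCyc_n$, while the $2(n+1)$ outside elements $a^i b$ all satisfy $(a^i b)^2 = a^{n+1}$ and hence have order $4$. From $a(a^i b)a^{-1} = a^{i+2}b$ and $b(a^i b)b^{-1} = a^{-i}b$ I would read off the conjugacy classes: the identity; the central involution $a^{n+1}$; the inversion pairs $\{a^k,a^{-k}\}$ for $1\le k\le n$; and the two order-$4$ classes $\{a^{\text{even}}b\}$ and $\{a^{\text{odd}}b\}$.

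The second step pushes the count inside $\langle a\rangle$ through the $BinCyc$ Lemma. By the L\"uck theorem above only classes of elements of order $p^d$ contribute, and inside $\langle a\rangle$ the conjugacy relation is precisely inversion $a^k\mapsto a^{-k}$, whose only fixed points are $1$ and $a^{n+1}$ (since $a^k=a^{-k}$ forces $k\equiv 0 \bmod (n+1)$). Writing $N_p := p^{\nu_p(2(n+1))} = \abs{2n+2}_p^{-1}$ for the number of $p$-power-order elements of $\mathbb{Z}_{2(n+1)}$ --- which is exactly the coefficient isolated in the $BinCyc$ Lemma --- the non-fixed such elements fold two-to-one and contribute $(N_p - f_p)/2$ classes, where $f_p$ is the number of $p$-power-order fixed points.

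The final step splits on the prime, and this is where I expect the only real friction. For odd $p$ neither $a^{n+1}$ nor the order-$4$ classes are of $p$-power order, so the sole $p$-power fixed point is the identity ($f_p=1$); restoring the identity for the tilde count gives $\tilde r(p,BinDih_{n+1}) = 1 + (N_p-1)/2 = \tfrac12\bigl(1 + \abs{2n+2}_p^{-1}\bigr)$, which sums to $\tfrac12\tfrac{1}{1-x} + \tfrac12\,\tilde{OGF}(p,BinCyc,x)$, as claimed. For $p=2$ all the discarded pieces return: both $1$ and the involution $a^{n+1}$ are $2$-power-order fixed points ($f_2=2$), and both order-$4$ reflection classes now count. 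Collecting the identity, the central involution, the two order-$4$ classes, and the folded cyclic pairs yields $\tilde r(2,BinDih_{n+1}) = 1 + 1 + 2 + (N_2-2)/2 = 3 + \tfrac12\abs{2n+2}_2^{-1}$. I expect this constant --- keeping the fixed involution and the two order-$4$ classes straight, and adding back the identity's $+1$ for the tilde count --- to be the one genuinely delicate point; I would pin it down with the sanity check $BinDih_2 = Q_8$, whose five conjugacy classes all have $2$-power order. Summing over $n$ then assembles the coefficient into $\tfrac{3}{1-x} + \tfrac12\,\tilde{OGF}(2,BinCyc,x)$, so the displayed $\tfrac{2}{1-x}$ should read $\tfrac{3}{1-x}$ once the identity class is included (equivalently, the displayed $p=2$ coefficient is the untilded $r$-value).
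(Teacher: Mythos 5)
Your argument is essentially the paper's own: enumerate the conjugacy classes of the dicyclic group (identity, the central involution $a^{n+1}$, the folded pairs $\{a^k,a^{-k}\}$, and the two order-$4$ classes of the outer elements), then halve the binary-cyclic count to account for the folding. The odd-$p$ computation agrees with the paper line for line. For $p=2$ your correction of the constant is right, and in fact the paper's proof already contains it: its first displayed identity $r(2,BinDih_n)=1+2+\tfrac12(r(2,\mathbb{Z}_{2n})-1)$ evaluates to $r(2,BinDih_n)=2+2^{\nu_2(2n)-1}$, hence $\tilde r(2,BinDih_n)=3+2^{\nu_2(2n)-1}$; the subsequent line relabels this $r$-value as $\tilde r$, silently dropping the $+1$ for the identity class. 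Your sanity check settles it: $BinDih_2=Q_8$ has five conjugacy classes, all of $2$-power order, so $\tilde r(2,Q_8)=5=3+\tfrac12\cdot 4$, whereas the stated formula yields $4$. So the first display of the Proposition should read $\sum_{n\geq 0}\bigl(3+\tfrac{1}{2\abs{2n+2}_2}\bigr)x^n=\tfrac{3}{1-x}+\tfrac12\,\tilde{OGF}(2,BinCyc,x)$, exactly as you conclude; the $p\neq 2$ display is correct as printed.
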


\begin{proof}
The conjugacy classes of the binary dihedral group are

\begin{itemize}
\setlength\itemsep{-1em}
\item $e$\\
\item $a^n = x^2$ which has order $2$ contributes $1$ to $r(2,BinDih_n)$\\
\item $a^m \simeq a^{2n-m}$ for $m \neq n$. They have same order as $m$ does in binary cyclic group $\mathbb{Z}_{2n}$. Contribute half as much as in $r(p,BinCyc_n)$\\
\item $x$ order $4$. Contribute $1$ to $r(2,BinDih_n)$\\
\item $ax$ order $4$. Contribute $1$ to $r(2,BinDih_n)$\\
\end{itemize}

\begin{eqnarray*}
r(2,BinDih_n) &=& 1+2+ \frac{1}{2}( r(2,\mathbb{Z}_{2n}) -1)\\
r(p \neq 2,BinDih_n)&=&\frac{1}{2} ( \tilde{r}(p,\mathbb{Z}_{2n}) - 1 )\\
\tilde{r}(2,BinDih_n) &=& 2 + \frac{1}{2} \tilde{r}(2,\mathbb{Z}_{2n}) = 2 + \frac{1}{2} 2^{\nu_2 (2n)} = 2 + 2^{\nu_2 (2n)-1}\\
&=& 2 + \frac{1}{2 \abs{2n}_2}\\
\tilde{r}(p,BinDih_n) &=& 1+ \frac{1}{2} ( \tilde{r}(p,\mathbb{Z}_{2n}) - 1 )\\
&=& 1 + \frac{1}{2}(  p^{\nu_p (2n)} - 1)\\
&=& 1 + \frac{1}{2}( \frac{1}{\abs{2n}_p} - 1)\\
\end{eqnarray*}
\end{proof}

The rest are the exceptional types which do not come in sequences. So we just list their $r(p,G)$ for later use.

\begin{lemma}[Exceptional Platonic Groups]

\begin{eqnarray*}
K^0 (BA_4 = BT) &\simeq& \mathbb{Z} \times \mathbb{Z}_{(2)}^1 \times \mathbb{Z}_{(3)}^2\\
K^0 ( BS_4 = BT_d) &\simeq& \mathbb{Z} \times \mathbb{Z}_{(2)}^{3} \times \mathbb{Z}_{(3)}\\
K^0 ( BS_5 ) &\simeq& \mathbb{Z} \times \mathbb{Z}_{(2)}^{3} \times \mathbb{Z}_{(3)} \times \mathbb{Z}_{(5)}\\
K^0 ( BA_5 = BI) &\simeq& \mathbb{Z} \times \mathbb{Z}_{(2)} \times \mathbb{Z}_{(3)} \times \mathbb{Z}_{(5)}^2\\
K^0 (BBinT = BSL(2,3)) &\simeq& \mathbb{Z} \times \mathbb{Z}_{(2)}^2 \times \mathbb{Z}_{(3)}^2\\
K^0 ( BBinI = BSL(2,5)) &\simeq& \mathbb{Z} \times \mathbb{Z}_{(2)}^2 \times \mathbb{Z}_{(3)}^1 \times \mathbb{Z}_{(5)}^2\\
K^0 ( BBinO) &\simeq& \mathbb{Z} \times \mathbb{Z}_{(2)}^5 \times \mathbb{Z}_{(3)}^1\\
\end{eqnarray*}

For $G \times \mathbb{Z}_2$ or $G \times \mathbb{Z}_4$ with one of the $G$ above, simply double/quadruple $\tilde{r}(2,G)$ and leave the others the same. This takes care of all the exceptional finite subgroups of $SO(3) \; O(3) \; Pin_\pm (3)$ and $Spin(3)$.

\end{lemma}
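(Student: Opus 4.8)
The plan is to apply L\"uck's theorem (the Theorem above) directly. Since it gives $K^0(BG) = \mathbb{Z} \times \prod_p (\mathbb{Z}_{(p)})^{r(p,G)}$ and $K^1(BG)=0$, the entire computation collapses to determining, for each prime $p$ dividing $|G|$, the integer $r(p,G)$: the number of conjugacy classes consisting of elements whose order is a nontrivial power of $p$. So for each of the seven exceptional groups I would fix its conjugacy class data (equivalently its character table, as recorded in \cite{Tomoda}), read off the order of a representative in each class, and then bin the classes by prime-power order, discarding the identity class and every class of composite order.

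For the rotation groups this is immediate from the cycle-type description. In $A_4$ the nonidentity classes are the single class of order-$2$ double transpositions and two classes of order-$3$ three-cycles, giving $r(2,\cdot)=1$, $r(3,\cdot)=2$; in $S_4$ the transpositions, double transpositions, and $4$-cycles all have $2$-power order while the lone class of $3$-cycles has order $3$, giving $r(2,\cdot)=3$, $r(3,\cdot)=1$; $S_5$ adjoins the class of $5$-cycles (and the order-$6$ class $(32)$ is discarded); and in $A_5$ the $5$-cycles split into two classes, yielding $r(5,\cdot)=2$ with $r(2,\cdot)=r(3,\cdot)=1$. For the binary groups I would exploit that each sits in $Sp(1)=SU(2)$, so $-I$ is the unique element of order $2$ and always contributes to $r(2,\cdot)$, and that in the central extension $1 \to \{\pm I\} \to BinG \to G \to 1$ an element covering one of order $m$ downstairs has order $m$ or $2m$. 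Reading off orders then gives: for $BinT=SL(2,3)$ the $2$-power classes $\{-I\}$ and the order-$4$ class ($r(2,\cdot)=2$) together with two order-$3$ classes; for $BinI=SL(2,5)$ the order-$2$ and order-$4$ classes, one order-$3$ class, and two order-$5$ classes; and for $BinO$ the classes of orders $2,4,4,8,8$ ($r(2,\cdot)=5$) and a single order-$3$ class, with the order-$6$, order-$10$ classes dropped throughout.

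Finally, for the products $G \times \mathbb{Z}_2$ and $G \times \mathbb{Z}_4$ I would use that the conjugacy classes of a direct product are exactly products of conjugacy classes, and that $(g,z)$ has $p$-power order precisely when both $g$ and $z$ do. Since $\mathbb{Z}_2$ and $\mathbb{Z}_4$ are $2$-groups, every element of the cyclic factor already has $2$-power order, so for $p=2$ the count of $2$-power-order classes (counting the identity, i.e.\ $\tilde{r}$) is multiplied by the number of classes of the factor, namely $2$ or $4$; this is the asserted doubling or quadrupling of $\tilde{r}(2,G)$, and the $K^0$-exponent is recovered via $r=\tilde{r}-1$. For odd $p$ the only element of $p$-power order in the cyclic factor is the identity, so $\tilde{r}(p,\cdot)$, and hence $r(p,\cdot)$, is unchanged. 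I expect the main obstacle to be purely bookkeeping: correctly distinguishing conjugacy classes of equal order in the binary octahedral and binary icosahedral groups (two order-$8$ classes, two order-$5$ classes respectively), since a miscount there directly corrupts an exponent. Cross-checking every class count against $|G|$ through the class equation is the natural safeguard.
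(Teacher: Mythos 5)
Your proposal is correct and matches the paper's (implicit) argument: the lemma is a direct application of L\"uck's theorem, reducing to tabulating the conjugacy classes of prime-power order for each exceptional group from its known class data, and your counts for all seven groups and for the $G \times \mathbb{Z}_2$, $G \times \mathbb{Z}_4$ products agree with the stated exponents.
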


\section{Weyl Groups}

\subsection{Type A}

For $G=S_n$, $\tilde{r}(p,S_n)$ is the number of partitions into powers of $p$. In particular, $\tilde{r}(2,S_n)$ is \cite{OEIS018819} and $r(3,S_n)$ is \cite{OEIS062051}. Note first two trivial groups $S_0 \simeq S_1 \simeq \setof{e}$.

\begin{lemma}
\begin{eqnarray*}
\tilde{OGF} (p,A,x) &\equiv& \sum \tilde{r}(p,n) x^n\\
&=& \prod_{j \geq 0} \frac{1}{1 - x^{p^j}}\\
OGF (p,A,x) &\equiv& \sum r(p,n) x^n\\
&=& \prod_{j \geq 0} \frac{1}{1 - x^{p^j}} - \frac{1}{1-x}\\
&=& \frac{1}{1-x} ( \prod_{j \geq 1} \frac{1}{1 - x^{p^j}} - 1 )\\
g(p,A,x,z) &\equiv& \sum_{j=0}^\infty z^j \sum_{k=1}^{\infty} \frac{x^{kp^j}}{k}\\
g(p,A,x,1) &=& \sum_{j=0}^\infty \sum_{k=1}^{\infty} \frac{x^{kp^j}}{k} = \log \tilde{OGF}(p,A,x)\\
g(p,A,x^p,1) &=& g(p,A,x,1) - \log ( 1 - x)\\
\end{eqnarray*}

Note that without the condition from $p$, this would be related to the logarithm of the Dedekind $\eta$ function.

\end{lemma}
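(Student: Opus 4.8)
The plan is to reduce every displayed identity to the single combinatorial assertion made just before the lemma, that $\tilde r(p,S_n)$ counts partitions of $n$ into powers of $p$, and then to extract the six formulas by routine Euler-product bookkeeping.

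\textbf{Step 1: from conjugacy classes to restricted partitions.} First I would justify the counting statement. Conjugacy classes of $S_n$ are indexed by cycle types, i.e.\ by partitions $\lambda\vdash n$, and a permutation of cycle type $\lambda$ has order equal to the least common multiple of the parts of $\lambda$. Such an order is a prime power $p^d$ with $d\ge 0$ precisely when that lcm is a power of $p$, and since the divisors of $p^d$ are exactly $p^0,\dots,p^d$, this happens if and only if every part of $\lambda$ is itself a power of $p$. The case $d=0$ allowed in $\tilde r$ is the identity class, i.e.\ the partition $1+\dots+1$, which already uses only the part $p^0=1$; so no separate treatment is needed. Hence $\tilde r(p,S_n)$ is the number of partitions of $n$ with all parts in $\{p^j:j\ge 0\}$.

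\textbf{Step 2: the two ordinary generating functions.} The generating function counting partitions whose parts are restricted to a set $S$ is $\prod_{s\in S}(1-x^s)^{-1}$; taking $S=\{p^j:j\ge0\}$ gives immediately $\tilde{OGF}(p,A,x)=\prod_{j\ge0}(1-x^{p^j})^{-1}$. Because the only class removed in passing from $\tilde r$ to $r$ is the identity, $r(p,S_n)=\tilde r(p,S_n)-1$ for all $n$, so subtracting $\sum_{n\ge0}x^n=(1-x)^{-1}$ yields $OGF(p,A,x)=\prod_{j\ge0}(1-x^{p^j})^{-1}-(1-x)^{-1}$; factoring the $j=0$ term $(1-x)^{-1}$ out of the product gives the third line.

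\textbf{Step 3: the logarithmic and functional-equation identities.} For the last three lines I would pass to logarithms. Applying the Mercator series $-\log(1-u)=\sum_{k\ge1}u^k/k$ with $u=x^{p^j}$ and summing over $j$ gives
\[
\log\tilde{OGF}(p,A,x)=-\sum_{j\ge0}\log\bigl(1-x^{p^j}\bigr)=\sum_{j\ge0}\sum_{k\ge1}\frac{x^{kp^j}}{k}=g(p,A,x,1),
\]
where the auxiliary variable $z$ merely grades the $j$-th factor. The functional equation is then the self-similarity $\tilde{OGF}(p,A,x)=(1-x)^{-1}\,\tilde{OGF}(p,A,x^p)$ of the Euler product (extract the $j=0$ factor and reindex $j\mapsto j+1$); at the level of $g$ this says that substituting $x\mapsto x^p$ sends $p^j\mapsto p^{j+1}$, so $g(p,A,x^p,1)$ is $g(p,A,x,1)$ with its $j=0$ summand $\sum_{k\ge1}x^k/k$ deleted, and that summand equals $-\log(1-x)$.

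The content here is entirely formal once Step 1 is in place; there is no real obstacle. The only points demanding care are the two pieces of bookkeeping: keeping the $d=0$ identity class consistent when moving between $r$ and $\tilde r$ (so that exactly $(1-x)^{-1}$ is subtracted), and tracking the isolated $j=0$ term in Step 3 so that the sign in the functional equation—governed by whether one adds or removes $\sum_{k\ge1}x^k/k=-\log(1-x)$—is recorded correctly.
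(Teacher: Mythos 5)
Your proposal follows essentially the same route as the paper: expand the logarithm of the Euler product via the Mercator series to obtain $g(p,A,x,1)=\log\tilde{OGF}(p,A,x)$, and peel off the $j=0$ factor to get the functional equation (the paper does this at general $z$, deriving $zg(p,A,x^p,z)=g(p,A,x,z)-\log(1-x)$, while you work at $z=1$ and additionally spell out the combinatorial Step 1 and the first three displayed identities, which the paper's proof leaves to the preceding discussion). One substantive point: carried to its conclusion, your bookkeeping gives $g(p,A,x^p,1)=g(p,A,x,1)+\log(1-x)$, since the deleted $j=0$ summand is $\sum_{k\geq 1}x^k/k=-\log(1-x)$; this is the correct sign, and the $-\log(1-x)$ in the lemma statement and in the paper's own last two lines is a sign slip (the paper substitutes $\log(1-x)$ where $\sum_{k}x^k/k$ is meant), so you should state the corrected identity explicitly rather than leaving the sign unresolved as you do in your closing remark.
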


\begin{proof}

\begin{eqnarray*}
g(p,A,x,1) = \log \sum \tilde{r}(p,n) x^n &=& \log \prod_{j \geq 0} \frac{1}{1 - x^{p^j}}\\
&=& \sum_{j=0}^\infty \log \frac{1}{1 - x^{p^j}}\\
&=& - \sum_{j=0}^\infty \log ( 1 - x^{p^j} )\\
&=& - \sum_{j=0}^\infty - \sum_{k=1}^{\infty} \frac{x^{kp^j}}{k}\\
&=& \sum_{j=0}^\infty \sum_{k=1}^{\infty} \frac{x^{kp^j}}{k}\\
\end{eqnarray*}

\begin{eqnarray*}
x \to x^p &\implies& \sum_{j=0}^\infty \sum_{k=1}^{\infty} \frac{x^{kp^j}}{k} \to \sum_{j=0}^\infty \sum_{k=1}^{\infty} \frac{x^{kp^{j+1}}}{k}\\
\sum_{j=0}^\infty z^j \sum_{k=1}^{\infty} \frac{x^{kp^{j+1}}}{k} &=& \sum_{j=0}^\infty z^{j-1} \sum_{k=1}^{\infty} \frac{x^{kp^j}}{k} - z^{-1} \sum_{k=1}^\infty \frac{x^k}{k}\\
g(p,A,x^p , z) &=& \frac{1}{z} ( g(p,A,x,z) - \log (1-x) )\\
z g(p,A,x^p , z) &=& g(p,A,x,z) - \log (1-x)\\
\end{eqnarray*}
\end{proof}

\begin{lemma}
When $x$ approaches the root of unity $x^l=1$ and $(l,p)=1$, there are divergences in $g(p,A,x,z)$. You can begin to see this in \cref{FigA2,FigA3}.
\end{lemma}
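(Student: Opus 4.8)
The plan is to reduce to the specialization $z=1$, where $g(p,A,x,1)=\log\tilde{OGF}(p,A,x)=-\sum_{j\ge0}\log(1-x^{p^j})$, and then to exhibit the divergence as an obstruction coming from the self-similarity established above. Write $\zeta$ for a primitive $l$-th root of unity and let $d=\mathrm{ord}_l(p)$ be the multiplicative order of $p$ modulo $l$, which is finite precisely because $(l,p)=1$. The first point to record is that this coprimality makes the situation genuinely delicate rather than obvious: since $\zeta^{p^j}=1$ would force $l\mid p^j$, hence $l=1$, \emph{no} individual factor $(1-x^{p^j})$ becomes singular as $x\to\zeta$, in contrast to the case $p\mid l$ where all tail factors blow up termwise. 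Instead the exponents $p^j\bmod l$ are periodic with period $d$, so the values $\zeta^{p^j}$ cycle through the orbit $\{\zeta^a:a\in\langle p\rangle\}\subset(\mathbb{Z}/l)^\times$, and the entire divergence will be governed by the single number $\Pi=\prod_{a\in\langle p\rangle}(1-\zeta^a)$, in particular by $P(\zeta)=|\Pi|=\prod_{a\in\langle p\rangle}|1-\zeta^a|$.

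The main step is to iterate the functional equation. Re-deriving it at $z=1$ (with the sign dictated by $\sum_k x^k/k=-\log(1-x)$, so that $g(x^p,1)=g(x,1)+\log(1-x)$) and collapsing $d$ copies gives $g(x^{p^d},1)-g(x,1)=\sum_{i=0}^{d-1}\log(1-x^{p^i})$. I would then restrict to the radial approach $x=r\zeta$, $r\to1^-$. Because $\zeta^{p^d}=\zeta$, the left argument becomes $x^{p^d}=r^{p^d}\zeta$, which approaches $\zeta$ along the same ray, while the right-hand side tends to $\sum_{i=0}^{d-1}\log(1-\zeta^{p^i})=\log\Pi$. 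If $g(r\zeta,1)$ possessed a finite radial limit $L$, then $g(r^{p^d}\zeta,1)\to L$ as well, since the reparametrization $r\mapsto r^{p^d}$ still runs to $1^-$; this forces the left-hand side to $0$. Comparing real parts with $\log\Pi$ then yields $\log P(\zeta)=0$, i.e. $P(\zeta)=1$. Ruling this out therefore excludes a finite radial limit and establishes the claimed divergence.

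A quantitative refinement makes the blow-up explicit and matches the pictures: splitting the series $\mathrm{Re}\,g(r\zeta,1)=-\sum_j\log|1-r^{p^j}\zeta^{p^j}|$ at the threshold $p^j\sim(1-r)^{-1}$, the low-index terms sit near their periodic limiting values while the high-index terms are negligible, giving $\mathrm{Re}\,g(r\zeta,1)\sim\frac{\log\frac{1}{1-r}}{d\log p}\,\log\frac{1}{P(\zeta)}\to\pm\infty$, a slow (logarithmic-in-$1/(1-r)$) divergence of the type one begins to see near the spikes in \cref{FigA2,FigA3}. The one genuinely nontrivial input, and the main obstacle, is the arithmetic fact $P(\zeta)\neq1$. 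I would prove it by grouping $(\mathbb{Z}/l)^\times$ into cosets of $\langle p\rangle$ to identify $N_{K/\mathbb{Q}}(\Pi)=\Phi_l(1)$, which equals $q$ when $l=q^k$ is a prime power and $1$ otherwise; when $p$ is a primitive root modulo a prime-power $l$ this already forces $P(\zeta)=\Phi_l(1)=q\neq1$ (for instance $l=3$ gives $3$), and the remaining proper-subgroup cases can be handled by pairing $\Pi$ with its complex conjugate through a Gauss-sum identity (for instance $l=7$, $p=2$ gives $P(\zeta)=\sqrt7$). The hard part is that when $l$ is not a prime power the norm alone is $1$, so one must separately exclude $|\Pi|=1$ in the chosen archimedean embedding; here Kronecker's theorem reduces the question to checking that $\Pi$ is not a root of unity, which I expect fails only for a short, explicitly listable set of $l$ outside the range of roots of unity relevant to the figures.
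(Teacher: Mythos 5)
Your route is genuinely different from the paper's. The paper works directly at the point $x=\zeta$: it regroups the inner sum $\sum_k \zeta^{kp^j}/k$ by the residue of $k$ modulo $l$ and points at the divergent harmonic pieces $\sum_m 1/(k+ml)$; the underlying mechanism is simply that the $j$-indexed terms $-\log(1-\zeta^{p^j})$ are periodic and nonzero, so the outer sum over $j$ cannot converge. You instead study the radial limit $x=r\zeta$, $r\to 1^-$, via the iterated functional equation (and your sign for the shift relation, $g(x^p,1)=g(x,1)+\log(1-x)$, is the correct one). That formulation is closer to what the figures display, but it trades an easy statement for a hard one, and the hard one is not true in the generality you need.

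The gap is the arithmetic input $P(\zeta)\neq 1$, which you defer to the end: it is not a technicality, and it fails in cases directly relevant to the figures. If $l=2q$ with $q$ an odd prime, then $\Phi_{2q}(1)=\Phi_q(-1)=1$, so whenever $p$ generates $(\mathbb{Z}/l)^\times$ you get $\Pi=\prod_{a\in(\mathbb{Z}/l)^\times}(1-\zeta^a)=\Phi_l(1)=1$ exactly; concrete instances are $p=5$, $l=6$ and $p=3$, $l=10$ (the latter sits inside the range of \cref{FigA3}). There $\log\Pi=0$, your obstruction vanishes identically, and the leading asymptotic $\mathrm{Re}\,g\sim\frac{\log\frac{1}{1-r}}{d\log p}\log\frac{1}{P(\zeta)}$ you derive is $0$, so your argument establishes nothing and the radial-limit version of the claim is itself in doubt at those points. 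Even when $|\Pi|=1$ but $\Pi\neq 1$ (e.g. $p=2$, $l=15$: the two cosets of $\langle 2\rangle$ in $(\mathbb{Z}/15)^\times$ are negatives of each other, so $\Pi\bar{\Pi}=\Phi_{15}(1)=1$ and in fact $\Pi=-1$), your step of ``comparing real parts'' discards exactly the surviving information; you would need to keep the full complex $\log\Pi$ and track the argument of $1-x^{p^j}$. The reliable repair is to prove what the paper's proof is actually reaching for --- divergence of the defining series at $x=\zeta$ itself, which follows at once because the summands $-\log(1-\zeta^{p^j})$ do not tend to $0$ --- rather than the stronger radial statement. (A smaller point: ``no finite radial limit'' should be upgraded to genuine blow-up by iterating along $r_m=r_0^{p^{-md}}\to 1^-$, but that part is routine once $\log\Pi\neq 0$ is known.)
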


\begin{proof}
\begin{eqnarray*}
g(p,A,x,z) &=& \sum_{j=0}^\infty z^j \sum_{k=1}^{l} x^{kp^j} \sum_{m=0}^\infty \frac{1}{k+ml}\\
\sum_{m=0}^\infty \frac{1}{k+ml} &\to& \infty
\end{eqnarray*}

\end{proof}

We may play the usual games that we do with the generating function for all partitions such as finding asymptotics and Mellin transforms.

\begin{lemma}[Asymptotics of $\tilde{r}(p,S_n)$ \cite{Latapy}]
$\log \tilde{r}(p,S_n) \approx \frac{\log^2 n}{2 \log p}$.
This slow growth rate relative to $n!$ indicates $OGF(p,A,x)$ would work well with Borel summation.
\end{lemma}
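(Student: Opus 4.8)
The plan is to work directly with the combinatorial quantity $b_p(n):=\tilde{r}(p,S_n)$, the number of partitions of $n$ into powers of $p$, whose ordinary generating function is $F(x)=\prod_{j\ge0}(1-x^{p^j})^{-1}$ as recorded above. Splitting off the parts equal to $1$ gives the self-similarity $F(x)=(1-x)^{-1}F(x^p)$, which at the level of coefficients is the recursion
\[
b_p(n)=\sum_{k=0}^{\lfloor n/p\rfloor} b_p(k),
\]
valid because a partition of $n$ with $a_0$ parts equal to $1$ is the same datum as a partition of $(n-a_0)/p$ once the remaining parts (all divisible by $p$) are divided by $p$, and $(n-a_0)/p$ runs over $0,1,\dots,\lfloor n/p\rfloor$ as $a_0$ runs through the residue class of $n$ modulo $p$. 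The sequence $b_p$ is non-decreasing, and everything below is essentially de Bruijn's analysis of this recursion.

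For the upper bound I would bound every summand by the largest, $b_p(n)\le(\lfloor n/p\rfloor+1)\,b_p(\lfloor n/p\rfloor)$, take logarithms, and iterate. Writing $n_i\approx n/p^{\,i}$ for the successive arguments, the iteration terminates after $K\approx\log n/\log p$ steps and gives $\log b_p(n)\lesssim\sum_{i=0}^{K-1}\log n_i\approx\sum_{i}(\log n-i\log p)=K\log n-\tfrac12 K^2\log p$, which with $K=\log n/\log p$ collapses to $\tfrac{(\log n)^2}{2\log p}(1+o(1))$. This already pins down the claimed constant from above.

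The matching lower bound is the real obstacle, because the Dirichlet series of the admissible parts, $\sum_{j\ge0}(p^{\,j})^{-s}=(1-p^{-s})^{-1}$, has its poles on the imaginary axis at $s=2\pi i m/\log p$ rather than a single positive real pole; consequently the Meinardus machinery does not apply verbatim and the true asymptotics carry de Bruijn's bounded oscillations. To leading order these are invisible, so I would recover the lower bound from the same recursion by a self-improving induction: the sum $\sum_{k\le n/p}b_p(k)$ is dominated by its top terms, and since the already-proven upper bound forces $b_p$ to vary slowly, a window of width of order $(n/p)\log p/\log n$ of summands each stays within a constant factor of $b_p(\lfloor n/p\rfloor)$. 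This yields $\log b_p(n)\ge\log b_p(\lfloor n/p\rfloor)+\log n-\log\log n+O(1)$, whose iteration reproduces $\tfrac{(\log n)^2}{2\log p}$ up to a correction of size $O\!\big(\tfrac{\log n\,\log\log n}{\log p}\big)=o\big((\log n)^2\big)$.

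As an analytic cross-check I would use the identity $\log F(e^{-t})=g(p,A,e^{-t},1)=\sum_{j\ge0}\sum_{k\ge1}e^{-kp^{j}t}/k$ from the previous lemma and take its Mellin transform, obtaining $\Gamma(s)\,\zeta(s+1)/(1-p^{-s})$. Each of the three factors contributes a simple pole at $s=0$, so the transform has a triple pole there whose leading Laurent coefficient is $1/\log p$; Mellin inversion then gives $\log F(e^{-t})\sim\tfrac{(\log(1/t))^2}{2\log p}$ as $t\to0^+$, the imaginary poles producing exactly the bounded oscillatory corrections mentioned above. Feeding this into the saddle-point estimate $\log b_p(n)\approx\min_{t>0}\big(nt+\log F(e^{-t})\big)$ and solving $n e^{-s}=s/\log p$ with $s=\log(1/t)$ gives $s\sim\log n$ and hence the same leading term, the positivity of the coefficients making the upper half of this estimate immediate. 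For the sharp statement, including the oscillation, I would defer to \cite{Latapy} (and to de Bruijn and Mahler).
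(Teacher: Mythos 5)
The paper offers no proof of this lemma at all---it is stated as a quotation of \cite{Latapy} (the result goes back to Mahler and de Bruijn for $p=2$)---so your proposal is not ``the same approach'' or ``a different approach'' so much as the only actual argument on the table. What you sketch is correct and is essentially the classical Mahler--de Bruijn analysis: the self-similarity $F(x)=(1-x)^{-1}F(x^p)$, the coefficient recursion $b_p(n)=\sum_{k\le \lfloor n/p\rfloor}b_p(k)$, the iterated crude bound giving the upper estimate $\tfrac{(\log n)^2}{2\log p}(1+o(1))$, and the Mellin/saddle-point heuristic with the triple pole at $s=0$ all check out, and you correctly flag that the imaginary poles of $(1-p^{-s})^{-1}$ rule out a naive Meinardus application and produce de Bruijn's oscillations in the lower-order terms. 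The one soft spot is the lower bound: the ``window of slowly varying summands'' argument works, but as written it leans on the already-proven upper bound to control how fast $b_p$ can vary, whereas the cleanest justification is again the recursion itself, which gives $b_p(m+1)-b_p(m)\le b_p(\lfloor (m+1)/p\rfloor)$ and hence that $b_p$ is flat on windows of length comparable to $b_p(\lfloor n/p\rfloor)/b_p(\lfloor n/p^2\rfloor)$; alternatively one gets the matching lower bound more directly by independently choosing the multiplicities $a_j\le \lfloor n/(Kp^{j})\rfloor$ of the parts $p^j$ for $1\le j<K=\lfloor\log_p n\rfloor$ and letting $a_0$ absorb the remainder, which yields $\log b_p(n)\ge \tfrac{(\log n)^2}{2\log p}-O(\log n\log\log n)$ with no induction. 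You do not address the informal ``Borel summation'' aside, but since $b_p(n)=n^{O(\log n)}$ grows subexponentially the generating function already converges on the unit disc and nothing further is needed there.
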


\begin{lemma}[Mellin Transform]

\begin{eqnarray*}
F(p,A,s) &\equiv& \mathcal{M} ( \log \tilde{OGF}(p,A,e^{-t}))\\
F(p,A,s) &=& \Gamma(s)\sum_{j\ge 0, k \ge 1} \frac{(kp^j)^{-s}}{k} = \Gamma(s)\frac{\zeta(s+1)}{1-p^{-s}}
\end{eqnarray*}

To recover the $\tilde{OGF}$ and therefore the $r(p, S_n)$, undo the Mellin transform and exponentiate.
\end{lemma}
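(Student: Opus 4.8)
The plan is to reduce the whole computation to the elementary Mellin transform of a single decaying exponential, leveraging the series expansion of $\log \tilde{OGF}$ already established in the preceding lemma. Recall that $g(p,A,x,1) = \log \tilde{OGF}(p,A,x) = \sum_{j \geq 0} \sum_{k \geq 1} \frac{x^{kp^j}}{k}$; substituting $x = e^{-t}$ converts the generating function into the Dirichlet-type series $\sum_{j,k} \frac{1}{k} e^{-kp^j t}$, which is precisely the shape on which $\mathcal{M}$ acts diagonally term by term.

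First I would record the single-term identity $\mathcal{M}(e^{-at})(s) = \int_0^\infty e^{-at}\, t^{s-1}\,dt = \Gamma(s)\, a^{-s}$, valid for $a > 0$ and $\mathrm{Re}(s) > 0$; this is just the integral definition of $\Gamma$ after the rescaling $u = at$. Applying it with $a = kp^j$ to each summand and extracting the common factor $\Gamma(s)$ yields $F(p,A,s) = \Gamma(s) \sum_{j \geq 0,\, k \geq 1} \frac{(kp^j)^{-s}}{k}$, which is the first displayed equality.

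Next I would factor the double sum. Splitting $(kp^j)^{-s} = p^{-js} k^{-s}$ decouples the indices, so the $j$-sum becomes a geometric series $\sum_{j \geq 0} p^{-js} = (1 - p^{-s})^{-1}$ and the $k$-sum collapses to $\sum_{k \geq 1} k^{-(s+1)} = \zeta(s+1)$. Their product gives the advertised closed form $\Gamma(s)\, \zeta(s+1)/(1 - p^{-s})$.

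The one real subtlety, and hence the main obstacle, is justifying the interchange of the double summation with the Mellin integral. For real $s$ with $\mathrm{Re}(s) > 0$ every integrand term is nonnegative, so Tonelli's theorem permits the swap unconditionally, and one verifies that the resulting expression $\Gamma(s)\, \zeta(s+1)/(1-p^{-s})$ is finite exactly on this half-plane, since the rightmost singularities of all three factors sit on $\mathrm{Re}(s) = 0$ (the integral likewise converges there: exponential decay at $t \to \infty$ and only a logarithmic-power blowup at $t \to 0^+$). Because both sides are holomorphic on $\mathrm{Re}(s) > 0$, the identity extends throughout this half-plane by the identity theorem. Recovering $\tilde{OGF}$, and thereby $\tilde{r}(p,S_n)$, then amounts to the inverse Mellin contour integral followed by exponentiation, as the statement indicates.
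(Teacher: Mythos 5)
Your proof is correct, but it takes a genuinely different route from the paper's. You compute $F(p,A,s)$ directly: substitute $x=e^{-t}$ into the double series $\log\tilde{OGF}(p,A,x)=\sum_{j\ge0,k\ge1}x^{kp^j}/k$ from the preceding lemma, apply $\mathcal{M}(e^{-at})(s)=\Gamma(s)a^{-s}$ termwise (with Tonelli justifying the interchange, since every term is nonnegative for real $s>0$), and then factor the resulting Dirichlet series into $\zeta(s+1)\cdot(1-p^{-s})^{-1}$. The paper instead Mellin-transforms the shift equation $j(pt)=j(t)-\log(1-e^{-t})$ inherited from that same lemma, uses the scaling law $\mathcal{M}(f(p\,\cdot))(s)=p^{-s}\mathcal{M}(f)(s)$ to obtain $(p^{-s}-1)F(p,A,s)=-\mathcal{M}(\log(1-e^{-t}))$, and leaves the evaluation of the right-hand side as $\Gamma(s)\zeta(s+1)$ implicit. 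What each buys: your termwise computation actually exhibits the intermediate expression $\Gamma(s)\sum_{j,k}(kp^j)^{-s}/k$ that appears in the statement (the functional-equation argument never produces it), and your convergence discussion pins down the half-plane $\mathrm{Re}(s)>0$ of validity; the paper's approach makes the factor $(1-p^{-s})^{-1}$ emerge structurally as the resolvent of the $p$-scaling symmetry, which is the more robust mechanism when one has only a Mahler-type functional equation and not an explicit series. Your route also sidesteps a sign slip in the paper's shift equation: since $\sum_{k\ge1}x^k/k=-\log(1-x)$, the correct relation is $g(p,A,x^p,1)=g(p,A,x,1)+\log(1-x)$; taken literally, the paper's version would yield $\Gamma(s)\zeta(s+1)/(p^{-s}-1)$, off by a sign from the (correct) stated formula, whereas your direct computation lands on the right answer.
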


\begin{proof}

Let $j(t) = \log \tilde{OGF} (p,A,e^{-t})$. This obeys a shift equation relating values at $pt$ and $t$.

\begin{eqnarray*}
j (pt) &=& j(t) - \log ( 1 - e^{-t})\\
p^{-s} \mathcal{M}( j(t))(s) &=& \mathcal{M}( j(t))(s) - \mathcal{M} ( \log (1-e^{-t}))\\
(p^{-s} - 1) F(p,A,s) &=& - \mathcal{M} ( \log (1-e^{-t}))\\
\end{eqnarray*}

\end{proof}

\begin{figure}[htb!]
\centering
\includegraphics[scale=.5]{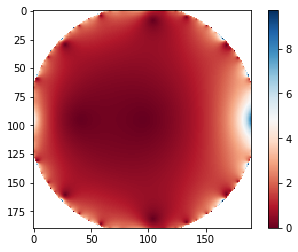}
\caption{Type $A$ $g(p,A,x,1)$ $p=2$ with $j$ and $k$ sums cutoff at $20$. \label{FigA2}}
\end{figure}

\begin{figure}[htb!]
\centering
\includegraphics[scale=.5]{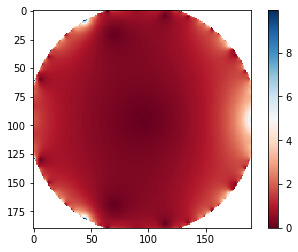}
\caption{Type $A$ $g(p,A,x,1)$ $p=3$ with $j$ and $k$ sums cutoff at $20$. \label{FigA3}}
\end{figure}

\subsection{Type B/C}

\begin{proposition}
The conjugacy classes whose elements have order $p^r$ for some $r \geq 0$ are labelled by pairs of partitions such that the total number is $n$. Call those positive and negative cycles. If $p \neq 2$ the partition is entirely in positive cycles and has to be into $p^r$ parts. If $p = 2$ then there are both positive and negative cycles each with $2^r$ parts for possibly different $r$'s.\\

\begin{eqnarray*}
\tilde{r}( 2 , W_{B_n} ) &=& \sum_{m=0}^n \tilde{r} ( 2, S_m ) \tilde{r} ( 2, S_{n-m})\\
\tilde{r}( p , W_{B_n} ) &=& \tilde{r} (p , S_n )\\
\tilde{OGF} ( 2,B , x ) &=& \tilde{OGF}(2,A,x) \tilde{OGF}(2,A,x) \\
OGF ( p \neq 2, B , x ) &=& OGF(p \neq 2,A,x)
\end{eqnarray*}
\end{proposition}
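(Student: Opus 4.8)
The plan is to use the standard description of the hyperoctahedral group $W_{B_n} = (\mathbb{Z}_2)^n \rtimes S_n$ as the group of signed permutations, together with its classification of conjugacy classes by pairs of partitions. First I would recall that every signed permutation decomposes into signed cycles, and that a cycle is called positive or negative according to whether the product of the signs occurring along it is $+1$ or $-1$. Two elements are conjugate precisely when they share the same multiset of (length, sign) data, so the conjugacy classes are labelled by ordered pairs of partitions $(\lambda,\mu)$ with $\abs{\lambda}+\abs{\mu}=n$, where $\lambda$ records the lengths of the positive cycles and $\mu$ those of the negative cycles. This is the combinatorial backbone asserted in the first sentence of the statement.

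The key step is to compute the order of a single signed cycle. A positive $k$-cycle acts as an ordinary $k$-cycle decorated by signs whose product is trivial, so its $k$-th power is the identity and its order is $k$. A negative $k$-cycle, by contrast, returns to the underlying unsigned identity after $k$ steps but leaves an overall sign flip on its support; one therefore needs $2k$ applications to reach the identity, giving order $2k$. The order of a general element is the least common multiple of the orders of its cycles. I expect this order computation --- in particular the factor of two attached to negative cycles --- to be the one genuinely delicate point, since every subsequent claim is a formal consequence of it.

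With the orders in hand the two cases separate by a parity argument. For an odd prime $p$, requiring the order to be a power of $p$ forces every cycle order to be a power of $p$; but a negative cycle contributes the even number $2k$, which can never be an odd prime power, so no negative cycles may occur and each positive cycle length must itself be a power of $p$. Thus $\mu=\emptyset$ and $\lambda$ is a partition of $n$ into powers of $p$ --- exactly the data counting $\tilde{r}(p,S_n)$. This gives $\tilde{r}(p,W_{B_n}) = \tilde{r}(p,S_n)$, and since $\tilde{r}=r+1$ the identity descends to $r$ as well, yielding $OGF(p\neq 2,B,x) = OGF(p\neq 2,A,x)$.

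For $p=2$ both signs are admissible: a positive $k$-cycle has order $k$ and a negative $k$-cycle order $2k$, and each is a power of two exactly when $k$ is. Hence the relevant classes are pairs $(\lambda,\mu)$ in which both $\lambda$ and $\mu$ are partitions into powers of two, with $\abs{\lambda}+\abs{\mu}=n$. Summing over the size $m=\abs{\lambda}$ gives $\tilde{r}(2,W_{B_n}) = \sum_{m=0}^n \tilde{r}(2,S_m)\,\tilde{r}(2,S_{n-m})$, a Cauchy convolution of the type-$A$ counts. Recognizing this convolution as the coefficientwise product of the two series then delivers $\tilde{OGF}(2,B,x) = \tilde{OGF}(2,A,x)\,\tilde{OGF}(2,A,x)$, completing the proposition.
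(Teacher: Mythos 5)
Your proposal is correct and follows essentially the same route as the paper: conjugacy classes of signed permutations are labelled by pairs of partitions, the key computation is that a negative $k$-cycle has order $2k$ while a positive one has order $k$, which forbids negative cycles for odd $p$ (reducing to type $A$) and yields the Cauchy convolution of $\tilde{r}(2,S_m)$ counts for $p=2$. Your write-up is simply more explicit than the paper's about why the negative cycle has order $2k$ and how the convolution translates into the product of generating functions.
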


\begin{proof}
For $B(\mathbb{Z}_2 \wr S_n)$, conjugacy classes are labelled by pairs of partitions \cite{DelducFeher}. Take the prime factorization of the order of the associated element. The negative cycles have a factor of $2$ from the sign flip around the cycle. A negative cycle $(12\cdots n)$ has order $2n$. This ensures that whenever $p \neq 2$, there can be no negative cycles. This reduces to type $A$. If $p=2$, then we are simply taking the least common multiple of a bunch of powers of $2$, so we just need to ensure the parts have length $2^r$.
\end{proof}

\subsection{Type D}

\begin{theorem}
\begin{eqnarray*}
\tilde{r}( 2 , W_{D_n} ) &=& \sum_{m=0}^n \tilde{r} ( 2, S_m ) \tilde{r} ( 2, S_{n-m}, even parts) + \tilde{r}(2,S_n,even lengths)\\
\tilde{r}( p , W_{D_n} ) &=& \tilde{r} ( p ,S_n ) + \tilde{r}(p,S_n,even lengths)\\
\tilde{OGF} ( 2, D  , x ) &=& \tilde{OGF}(2,A,x) (\frac{1}{2} (G(x,1)+G(x,-1))) + \tilde{OGF}(2,A,x^2)\\
&=& \frac{1}{2} \tilde{OGF}(2,A,x)^2 + \frac{1}{2} \tilde{OGF}(2,A,x) (1-x) + \tilde{OGF}(2,A,x^2)\\
\tilde{OGF} (  p \neq 2 , D, x ) &=& \tilde{OGF} (p\neq 2 , A,x) \\
OGF(p \neq 2,D, x) &=& OGF (p\neq 2 , A,x)\\
\end{eqnarray*}
\end{theorem}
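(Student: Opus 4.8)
The plan is to lift everything to the hyperoctahedral group $W_{B_n} = \mathbb{Z}_2 \wr S_n$ and then descend to the index-two subgroup $W_{D_n}$. First I would invoke the parametrization used in the Type B/C proposition above: conjugacy classes of $W_{B_n}$ correspond to ordered pairs of partitions $(\lambda,\mu)$ with $|\lambda| + |\mu| = n$, where $\lambda$ lists the positive-cycle lengths and $\mu$ the negative-cycle lengths. A positive $i$-cycle has order $i$ and a negative $i$-cycle has order $2i$, so an element has order a power of $p$ exactly when every part of $\lambda$ is a power of $p$ and, in addition, either $\mu = \emptyset$ (forced when $p \neq 2$, since $2i$ is never an odd prime power) or every part of $\mu$ is a power of $2$ (when $p = 2$). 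Since the sign homomorphism $W_{B_n} \to \{\pm 1\}$ sending an element to its number of sign changes mod $2$ evaluates to $(-1)^{\ell(\mu)}$, its kernel $W_{D_n}$ consists precisely of the classes with $\ell(\mu)$ even.

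Next I would pass from counting $W_{B_n}$-classes to counting $W_{D_n}$-classes. As $[W_{B_n} : W_{D_n}] = 2$, a $W_{B_n}$-class that lies in $W_{D_n}$ either remains a single class or splits into two, and it splits exactly when its $W_{B_n}$-centralizer contains no sign-odd element. The hard part of the whole argument is this splitting criterion. Computing the local centralizer of a single cycle shows that a positive $i$-cycle contributes the ``flip all $i$ signs'' element, which is sign-odd iff $i$ is odd, whereas a negative $i$-cycle contributes its own odd powers, which are always sign-odd. Hence the centralizer lies inside $W_{D_n}$ iff $\mu = \emptyset$ and every part of $\lambda$ is even, and these are exactly the classes that split. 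This is the classical description of the degenerate (``very even'') classes of $W_{D_n}$, which I would cite (e.g.\ Carter's classification of Weyl-group conjugacy classes) and back up with the centralizer computation just sketched. It follows that the number of $W_{D_n}$-classes of a given order equals the number of admissible $W_{B_n}$-classes plus $+1$ for each splitting class of that order.

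For $p \neq 2$ the order constraint forces $\mu = \emptyset$, so the admissible classes are just partitions of $n$ into powers of $p$, giving the Type A count $\tilde r(p,S_n)$; and since no power of an odd prime is even, no nonempty $\lambda$ satisfies the splitting condition, so the correction term $\tilde r(p, S_n, \text{even lengths})$ vanishes for $n > 0$. This yields $\tilde{OGF}(p \neq 2, D, x) = \tilde{OGF}(p \neq 2, A, x)$, and removing the single identity class gives the same identity for $OGF$. For $p = 2$ the admissible classes are pairs $(\lambda,\mu)$ with both partitions into powers of two and with $\ell(\mu)$ even; summing over $m = |\lambda|$ produces the main term $\sum_m \tilde r(2,S_m)\,\tilde r(2,S_{n-m},\text{even parts})$, the second factor counting partitions of $n-m$ into powers of two with an even number of parts. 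The splitting correction counts those $\lambda$ (with $\mu = \emptyset$) all of whose parts are even, i.e.\ lie in $\{2,4,8,\dots\}$, which the statement records as $\tilde r(2, S_n, \text{even lengths})$.

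Finally I would package these counts as generating functions. Introducing $G(x,z) = \prod_{j \ge 0}(1 - z x^{2^j})^{-1}$, which marks partitions into powers of two by size (in $x$) and by number of parts (in $z$), the operator $\tfrac12(G(x,1) + G(x,-1))$ extracts the even-length terms, so the main term becomes $\tilde{OGF}(2,A,x)\cdot \tfrac12(G(x,1)+G(x,-1))$ and the correction is $\tilde{OGF}(2,A,x^2)$, the series for partitions into even powers of two. The only genuine computation remaining is $G(x,-1)$: the telescoping identity $\prod_{j \ge 0}(1 + x^{2^j}) = (1-x)^{-1}$, reflecting unique binary expansions, gives $G(x,-1) = 1 - x$, and together with $G(x,1) = \tilde{OGF}(2,A,x)$ this produces the stated closed form $\tfrac12 \tilde{OGF}(2,A,x)^2 + \tfrac12 \tilde{OGF}(2,A,x)(1-x) + \tilde{OGF}(2,A,x^2)$. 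The main obstacle is the splitting criterion; once it is in hand, everything else is bookkeeping plus the one binary-expansion identity.
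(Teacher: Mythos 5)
Your proposal is correct and follows essentially the same route as the paper: parametrize the classes of $W_{D_n}$ by pairs of partitions with the negative part of even length, add one extra class for each all-positive partition with all parts even, and then run the identical $G(x,u)$ bookkeeping with $G(x,-1)=1-x$. The only substantive difference is that you actually justify the splitting criterion via the centralizer computation, whereas the paper simply asserts that such classes give two conjugacy classes.
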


\begin{proof}

For $W_{D_n}$ the second partition must have an even number of parts. Also when all cycles are positive of even length, the same partition gives 2 separate conjugacy classes. For the positive cycles we get order as the least common multiple of the lengths. The negative partitions are different only in the first cycle as $(123\cdots n) \to (-n,-1,2,3\cdots n-1)$. $g^n (1\cdots n)= (12\cdots n)$ so has a power of $2$ order if and only if the underlying partition did.

$\tilde{r}(2,S_{n-m},evenparts)$ counts partitions of $n-m$ into powers of 2 length parts and there must be an even number of them. Keep track of the number of parts by $u$; so we only wish to take the sum of only even powers of $u$.

\begin{eqnarray*}
G(x,u) &=& \prod_{j \geq 0} \frac{1}{1 - u x^{2^j}}\\
\sum \tilde{r}(2,S_n,evenparts) x^n &=& \frac{1}{2} (G(x,1) + G(x,-1))\\
G(x,-1) &=& 1-x\\
\sum \tilde{r}(2,S_n,evenparts) x^n &=& \frac{1}{2} G(x,1) + \frac{1}{2} ( 1-x)\\
\end{eqnarray*}

That means that $\tilde{r}(2,S_n,evenparts) = \frac{1}{2} \tilde{r}(2,S_n)$ for all $n \geq 2$.

$\tilde{r}(2,S_n,evenlengths)$ counts partitions of $n$ in powers of $2$ but none can be $2^0$.  If $n$ is odd, then it is $0$. This means we can halve all the parts and remove the condition of even lengths.

\begin{eqnarray*}
\tilde{r}(2,S_n,evenlengths) &=& \tilde{r}(2,S_{n/2})\\
\sum \tilde{r}(2,S_n,evenlengths) x^n &=& \tilde{OGF} (2,A,x^2)\\
\end{eqnarray*}

In contrast, $\tilde{r}(p \neq 2,S_n,even lengths) = 0$ because then $p^r$ are all odd.

\end{proof}

\subsection{Exceptionals}

\begin{lemma}[Exceptional Examples]

The exceptional classes finish off the possible Weyl groups and they can be read from \cite{Carter,Zhang}.

\begin{itemize}
\setlength\itemsep{-1em}
\item $W_{D_4}$ $r(2)= 10 \; r(3) = 1$ \footnote{This isn't exceptional in some ways but very exceptional in others.}\\
\item $W_{F_4}$ $r(2) = 13\; r(3) = 3$\\
\item $W_{G_2}$ $r(2) = 3\; r(3) = 1$\\
\item $W_{E_6}$ $r(2)=9 \; r(3)=4 \; r(5)=1$\\
\item $W_{E_7}$ $r(2)=23 \; r(3)=4 \; r(5)=1 \; r(7)=1$\\
\item $W_{E_8}$ $r(2) = 31\;r(3)=6\;r(5)=2\;r(7)=1$\\
\end{itemize}

\end{lemma}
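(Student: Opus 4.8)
The quantity $r(p,G)$ counts the conjugacy classes all of whose elements have order $p^d$ with $d \geq 1$, so the task is purely to enumerate, in each exceptional Weyl group, the nontrivial conjugacy classes of prime-power order. My plan is to read the class data straight from Carter's classification \cite{Carter}, in which every conjugacy class of a Weyl group is labelled by an admissible (Carter) diagram, and each such label determines the characteristic polynomial of the element on the reflection representation and hence the element's order. For each group $W \in \{W_{F_4}, W_{G_2}, W_{E_6}, W_{E_7}, W_{E_8}\}$ and each prime $p \mid |W|$, I would scan the list of classes, keep those whose order is a power of $p$ exceeding $1$, and tally them; the tables of \cite{Zhang} then pin down any ambiguity of convention.

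Two entries admit independent sanity checks that I would run first. The group $W_{G_2}$ is the dihedral group of order $12$, whose six classes are the identity, the central rotation of order $2$, the rotation pairs of orders $3$ and $6$, and the two reflection classes of order $2$; counting prime-power orders gives $r(2)=3$ (the central involution together with the two reflection classes) and $r(3)=1$ (the order-$3$ rotation class), exactly as claimed. The group $W_{D_4}$, despite its triality, still obeys the Type $D$ generating function proved above: extracting the coefficient of $x^4$ from $\tilde{OGF}(2,D,x)$ gives $\tilde{r}(2,W_{D_4}) = \frac{1}{2}(16) + \frac{1}{2}(2) + 2 = 11$, hence $r(2,W_{D_4}) = 10$, while $\tilde{OGF}(3,D,x) = \tilde{OGF}(3,A,x)$ yields $\tilde{r}(3,W_{D_4}) = \tilde{r}(3,S_4) = 2$ and $r(3,W_{D_4}) = 1$. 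This confirms the first bullet and reassures me that my counting convention agrees with the earlier sections.

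For the remaining groups there is no closed form, so the work is genuine bookkeeping over Carter's tables. Since $|W_{F_4}| = 2^7 3^2$, $|W_{E_6}| = 2^7 3^4 5$, $|W_{E_7}| = 2^{10} 3^4 \cdot 5 \cdot 7$, and $|W_{E_8}| = 2^{14} 3^5 5^2 \cdot 7$, only the primes listed in each bullet can contribute, and for each one I would isolate the classes of order $2,4,8,\dots$, then those of order $3,9,\dots$, and so on. In practice I would reproduce every count with the character-table library in GAP, reading off \texttt{OrdersClassRepresentatives} and filtering for prime-power values, which makes the enumeration mechanical and independently verifiable.

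The main obstacle is reliability rather than concept. Carter's diagrams encode a class through its action on the root lattice, not through an explicit integer order, so one must pass through the characteristic polynomial, a product of cyclotomic factors, to read the true order, and an element can have prime-power order even when its diagram looks composite. The delicate points are the off-by-one traps: forgetting to exclude the identity from the $d \geq 1$ count, or mistaking an element of order $6$ for one of order $2$ or $3$. The GAP recomputation, cross-checked against the $W_{G_2}$ direct calculation and the $W_{D_4}$ agreement with the Type $D$ formula, is what I would lean on to eliminate such slips.
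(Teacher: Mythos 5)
Your proposal matches the paper's treatment, which offers no argument beyond citing Carter and Zhang for these values (and uses a computer-algebra check, via Sage, for the neighboring $H_3/H_4$ lemma, exactly analogous to your GAP verification). Your added sanity checks are sound: the direct enumeration for $W_{G_2}$ and the extraction $\tilde{r}(2,W_{D_4}) = \tfrac{1}{2}(16)+\tfrac{1}{2}(2)+2 = 11$, $\tilde{r}(3,W_{D_4})=\tilde{r}(3,S_4)=2$ from the Type $D$ generating function both agree with a direct count of the $13$ classes of $W_{D_4}$ (six of order $2$, four of order $4$, one each of orders $1$, $3$, $6$).
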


\begin{lemma}[$H_{3/4}$]

If we relax the condition to finite Coxeter groups from Weyl groups, the symmetries of the dodecahedron and 600-cell are also allowed.

\begin{itemize}
\setlength\itemsep{-1em}
\item $H_3$ $r(2)=3$ $r(3)=1$ $r(5)=2$\\
\item $H_4$ $r(2)=6$ $r(3)=2$ $r(5)=5$\\
\end{itemize}
\end{lemma}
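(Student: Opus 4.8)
The plan is to handle the two groups by separate structural reductions, since only $H_3$ is directly of the product form already treated. For $H_3$ I would first recall that the full icosahedral group splits as $H_3 \cong A_5 \times \mathbb{Z}_2$, the rotation group $I \cong A_5$ together with the central inversion $-I \in O(3)$ (which has determinant $-1$ and commutes with everything, so the product is direct). The data for $K^0(BA_5 = BI)$ is already recorded above, giving $r(2,A_5)=1$, $r(3,A_5)=1$, $r(5,A_5)=2$, so the $G \times \mathbb{Z}_2$ rule from the exceptional Platonic lemma applies verbatim: doubling $\tilde r(2,A_5)=2$ yields $\tilde r(2,H_3)=4$, hence $r(2,H_3)=3$, while the odd primes are untouched, giving $r(3,H_3)=1$ and $r(5,H_3)=2$. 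This is the easy half.

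For $H_4$ I would use the quaternionic model of the $600$-cell. Its rotation subgroup $W(H_4)^+$ of order $7200$ is $(SL(2,5)\times SL(2,5))/\{\pm(1,1)\}$ acting on $\mathbb{H}=\mathbb{R}^4$ by $x \mapsto p x q^{-1}$ with $p,q$ in the binary icosahedral group $BinI = SL(2,5)$, and the full group of order $14400$ is obtained by adjoining the orientation-reversing map $\tau\colon x \mapsto \bar x$. Since $\bar p = p^{-1}$ for unit quaternions, conjugating $x\mapsto pxq^{-1}$ by $\tau$ produces $x \mapsto q x p^{-1}$, i.e. $\tau$ acts on the rotation subgroup by swapping the two tensor factors. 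I would therefore organize the count in two stages, the rotation classes and then the reflection coset $G\tau$, and in each stage restrict from the start to elements of $p$-power order.

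For the rotations, the conjugacy classes of $(SL(2,5)\times SL(2,5))/\{\pm(1,1)\}$ are the orbits of the involution $\sigma \times \sigma$ on pairs of conjugacy classes of $SL(2,5)$, where $\sigma(C)=(-1)\cdot C$ interchanges the classes of orders $1\leftrightarrow 2$, $3\leftrightarrow 6$, $5\leftrightarrow 10$ and fixes the order-$4$ class; a short centralizer argument shows each such orbit gives exactly one class in the central quotient, with no splitting. One then folds these by the factor-swap $\tau$. Having $p$-power order forces $p$ and $q$ to have $p$-power order in $SL(2,5)$, so the values $r(2,SL(2,5))=2$, $r(3,SL(2,5))=1$, $r(5,SL(2,5))=2$ read off from $K^0(BBinI)$ above drive everything. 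Elements of the coset $G\tau$ have even order (if $g\tau$ had odd order $m$ then $g\tau=((g\tau)^2)^{(m+1)/2}$ would lie in $G$), so reflections never contribute for odd $p$; the fold then gives exactly $2$ classes for $p=3$ (namely $[(g,g)]$ and the merged $[(1,g)]\sim[(g,1)]$ with $g$ of order $3$) and, from the eight order-$5$ rotation classes, exactly $5$ classes for $p=5$.

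The prime $p=2$ is the laborious case and is where I expect the main obstacle. The rotations contribute three classes: $-I$, the class of $[(p,q)]$ with both $p,q$ of order $4$, and the single folded order-$4$ class $[(1,q)]\sim[(q,1)]$. For the coset $G\tau$ I would use $\tau$-twisted conjugacy, $g \mapsto g' g\,\phi(g')^{-1}$ with $\phi$ the factor-swap: an element $g\tau$ squares to $g\,\phi(g)$, so order-$2$ reflections satisfy $\phi(g)=g^{-1}$, i.e. $q=\pm p^{-1}$, and these fall into the two families $x\mapsto \pm\bar x$ separated by trace $\mp2$ (the $60$ Coxeter reflections, and their products with $-I$); order-$4$ reflections are those with $pq$ of order $4$ and I would show they form a single twisted-conjugacy class by checking the class of $pq$ in $SL(2,5)$ is a complete invariant. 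This gives three reflection classes and $r(2,H_4)=6$. The hard part is exactly this bookkeeping in the reflection coset: ruling out spurious class-splitting under the central quotient, correctly separating the two order-$2$ reflection families, and verifying that the order-$4$ reflections do not split. As a cross-check I would match the running totals against the $34$ conjugacy classes of $H_4$ tabulated in \cite{Carter,Zhang}.
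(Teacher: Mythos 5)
Your proposal is correct, but it takes a genuinely different route: the paper's proof of this lemma is a one-line machine verification (listing the orders of conjugacy class representatives of \verb|ReflectionGroup(['H',3])| and \verb|['H',4]| in Sage and counting the prime-power ones), whereas you give a structural hand computation. For $H_3$ your reduction $H_3 \cong A_5 \times \mathbb{Z}_2$ together with the paper's own $G \times \mathbb{Z}_2$ doubling rule and the recorded data for $K^0(BA_5)$ is exactly right and is the shortest conceptual path. For $H_4$ your central-product model $(SL(2,5)\times SL(2,5))/\{\pm(1,1)\}$ extended by quaternion conjugation is also correct, and the obstacle you flag as ``the hard part'' in fact dissolves: the class of $pq$ in $SL(2,5)$ really is a complete invariant of twisted conjugacy (given $p'q' = a(pq)a^{-1}$, set $b = p'^{-1}ap$ and check $(apb^{-1}, bqa^{-1}) = (p',q')$; conjugation by coset elements only replaces $pq$ by $qp$, which is already conjugate to $pq$), so the nine reflection-coset classes biject with the nine classes of $SL(2,5)$, of which exactly three ($pq \in \{1\}, \{-1\}, 4A$) have $2$-power order, and no splitting questions arise since classes of a central quotient are just orbits of the center translating classes, and classes never split on passing up an index-two extension. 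Your running totals ($3+3=6$ for $p=2$, $2$ for $p=3$, $5$ for $p=5$, and $25+9=34$ classes overall) all check out against \cite{Carter,Zhang}. What the paper's approach buys is speed and certainty at the cost of opacity; what yours buys is a software-free verification that reuses the paper's earlier Platonic data and explains \emph{why} the answer looks like a folded square of the $BinI$ data, which fits the paper's theme of building the exceptional Coxeter answers out of the Platonic ones.
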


\begin{proof}
This can be read off from Sage:

\begin{verbatim}
W = ReflectionGroup(['H',3]); W
CW=W.conjugacy_classes_representatives();
orders=[CW[i].order() for i in range(0, len(CW))]; orders
\end{verbatim}
\end{proof}

\section{Analogy with Hilbert Schemes}

\begin{definition}[$Hilb^n \mathbb{C}^2$]
We may resolve $\mathbb{C}^{2n}/S_n$ by taking ideals of length $n$ in $\mathbb{C}[x,y]$. Maximal ideals like $(x-a,y-b)$ will be points and if there are $n$ disjoint points we get an ideal of length $n$. Similarly define $Hilb^n$ for other surfaces resolving $\mathbb{C}^2 / G$.
\end{definition}

If we didn't work algebraically, but topologically instead we would see a contractible space quotiented by $S_n$. The action isn't free which weakens the analogy. Repeat the same process for $\mathbb{C}^2 / G$ with $G$ a binary Platonic group indexed by an ADE Lie algebra. In the purely homotopic world this resembles $B(G \wr S_n)$ as a proxy for $Hilb^n (\mathbb{C}^2/G)$. We cannot accomodate $T$ equivariance because that escapes the world of quotients by finite groups. It can only be approximated by $\mathbb{Z}_{m_1} \times \mathbb{Z}_{m_2}$ for which one may refer to \eqref{cor:TwoCyclics}.

Nakajima takes $K_T ( Hilb_n (\mathbb{C}^2/G))$ in a procedure that gives the $U_q ( \hat{\mathfrak{g}})$ for the corresponding Lie algebra \cite{Nakajima}. For $G=e$ the trivial group, this constructs a q-boson algebra. $G=\mathbb{Z}_2$ like the type B/C Weyl groups above corresponds to $A_1$ or $U_q (\hat{\mathfrak{sl}_2})$.

\if\doAlgSec1
\begin{definition}[``Algebra" Structure]
Let $G_\bullet$ be a sequence of finite groups satisfying $\forall n \; m \; \; G_n \times G_m \hookrightarrow G_{n+m}$. In this case define a graded ring by

\begin{eqnarray*}
\mathcal{A} &=& \bigoplus_n K^0 ( BG_n ) [n]
\end{eqnarray*}

$g_1 \in G_n$ and $g_2 \in G_m$ defines a group element $g_1 \times g_2 \in G_{n+m}$. Conjugating each of $g_1$ and $g_2$ does not affect the conjugacy class in $G_{n+m}$. When the order of $g_{1,2}$ are $p^r$ for some $r$, then $g_1 \times g_2$ will too. That means it induces the map $K^0 (B G_n )[n] \times K^0 (BG_m ) [m] \to K^0 (BG_{n+m})[n+m]$. This gives the graded ring structure. This ``algebra" has distinguished sub``algebras" for every prime and every subset of the primes by only keeping the allowed conjugacy classes.
\end{definition}
\fi

\begin{lemma}
Conjugacy classes in $G \wr S_n$ are given by a partition of $n$ and a labelling of each part with a conjugacy class of $G$. We can decorate all with the same conjugacy class of $G$ and get an analog of the decomposition into q-boson algebras.
\end{lemma}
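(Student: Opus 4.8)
The plan is to first establish the classical parametrization of conjugacy classes of the wreath product $G \wr S_n = G^n \rtimes S_n$ by \emph{cycle-product data}, and then read off the q-boson interpretation as a factorization over the conjugacy classes of $G$. Write an element as $(\underline g;\sigma)$ with $\underline g = (g_1,\dots,g_n)\in G^n$ and $\sigma\in S_n$, using the product $(\underline g;\sigma)(\underline h;\tau)=(g_i\, h_{\sigma^{-1}(i)}\,;\,\sigma\tau)$. For each cycle $(i_1\,i_2\,\cdots\,i_k)$ of $\sigma$ I attach the \emph{cycle product} $g_{i_k}\cdots g_{i_2}g_{i_1}\in G$ and record its $G$-conjugacy class. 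The invariant I aim to prove complete is: for each cycle length $k$ and each conjugacy class $c$ of $G$, the number of $k$-cycles of $\sigma$ whose cycle product lies in $c$. Packaging this as one partition $\lambda^{(c)}$ per class $c$ with $\sum_c\abs{\lambda^{(c)}}=n$ is exactly the stated data of a partition of $n$ with each part labelled by a conjugacy class of $G$.

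The two directions to verify are well-definedness and completeness of this invariant. Conjugating by a pure permutation $(\underline 1;\tau)$ merely relabels the indices, hence permutes cycles of equal length among themselves and leaves each cycle product in its class; so I only need to control conjugation by diagonal elements $(\underline h;e)$. A short computation gives the gauge action $g_i \mapsto h_i\, g_i\, h_{\sigma^{-1}(i)}^{-1}$, and around a single $k$-cycle this telescopes, conjugating the cycle product by a single $h$. This is the step I expect to be the main obstacle, since it is also the engine of completeness: by solving the recursion $h_i = h_{i-1}g_i^{-1}$ along a cycle (with one free endpoint) I can normalize any single-cycle element to $(g_k\cdots g_1,\,e,\dots,e;\,\sigma_0)$, i.e.\ push all the group data into one coordinate equal to the cycle product, with $\sigma_0$ a standard cycle. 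A further $G$-conjugation replaces that entry by an arbitrary representative of its class, and cyclic rotation shows independence of basepoint; thus for a single cycle the $G$-class of the cycle product is a \emph{complete} invariant.

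With the single-cycle normal form in hand, the general case follows by treating $\sigma$ block-diagonally along its disjoint cycles: normalize each block, then use permutation conjugacies to sort blocks of equal length and equal cycle-product class into a standard order. Two elements with the same colored-partition data are carried to the same normal form, so they are conjugate; conversely every colored partition is realized by an explicit normal-form element. (As a consistency check one may compare with the count of such colored partitions, which matches the number of conjugacy classes.) This proves the first sentence of the Lemma, the classical result behind \cite{DelducFeher}.

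For the second sentence, I would observe that the parametrization factorizes: the set of colored partitions of $n$ is the set of tuples $(\lambda^{(c)})_{c}$ indexed by the conjugacy classes of $G$, so the associated generating count factors as $\prod_{c}\prod_{j\ge1}(1-x^{j})^{-1}$, one identical Fock/partition factor per class $c$. Decorating \emph{all} parts with a single fixed class $c$ isolates one such tensor factor, which is precisely the $G=e$ situation whose Nakajima construction yields a single q-boson algebra (as noted for $Hilb^n(\mathbb{C}^2)$ earlier in this section). Ranging over the $\abs{\mathrm{Conj}(G)}$ classes therefore exhibits the K-theoretic data as an analog of a tensor product of q-boson algebras, one per node of the corresponding McKay/ADE diagram, in agreement with \cite{Nakajima}; restricting parts to powers of $p$ replaces each factor by $\tilde{OGF}(p,A,x)=\prod_{j\ge0}(1-x^{p^{j}})^{-1}$.
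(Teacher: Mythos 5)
Your proposal is correct, and it is in fact more than the paper provides: the paper states this lemma with no proof at all (the \verb|proof| environment that follows it belongs to the corollary about $\tilde{OGF}(p,G\wr A,x)$), implicitly deferring to the standard literature and to the citation of Delduc--Feher used earlier for the $\mathbb{Z}_2\wr S_n$ case. Your cycle-product argument is the classical self-contained route: the gauge computation $g_i\mapsto h_i g_i h_{\sigma^{-1}(i)}^{-1}$, the telescoping of the cycle product to a single $G$-conjugation, and the normal form obtained by solving the recursion along each cycle are all verified correctly (with your conventions the product lands in the last coordinate of each cycle, which is immaterial), and together they establish both well-definedness and completeness of the colored-partition invariant. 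What this buys over the paper's treatment is an actual proof of the parametrization rather than an appeal to it; what the paper's (non-)treatment buys is brevity, since the statement is only used as input to the generating-function corollary. Your reading of the second sentence --- one partition/Fock factor per conjugacy class of $G$, so that fixing a single class isolates one q-boson factor and the $p$-power restriction replaces each factor by $\tilde{OGF}(p,A,x)$ --- is consistent with how the paper uses the lemma in the corollary, where the generating function becomes the $\tilde{r}(p,G)$-th power of the type $A$ one.
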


\begin{cor}
\begin{eqnarray*}
\tilde{OGF}(p,G \wr A,x) &=& \tilde{OGF}(p,A,x)^{\tilde{r}(p,G)}\\
\end{eqnarray*}

This allows us to recover $K^0 (B (G \wr S_n))$ by looking at the appropriate coefficient of these functions for each $p$. We can also easily let $G$ vary if need be.

\end{cor}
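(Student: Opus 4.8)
The plan is to single out, among the conjugacy classes described in the preceding lemma, exactly those whose elements have order $p^d$, and then read the generating function off the resulting combinatorial description. Recall that a conjugacy class of $G \wr S_n$ is encoded by a partition $\lambda = (\lambda_1, \dots, \lambda_k)$ of $n$ together with a conjugacy class $[h_i]$ of $G$ attached to each part $\lambda_i$. The first step is to pin down the order of a single decorated cycle: an element of $G \wr S_\ell$ supported on one $\ell$-cycle whose cycle product lies in $[h]$ has order exactly $\ell \cdot \mathrm{ord}(h)$. This is the same computation already used implicitly in the Type B/C proposition, where a negative cycle of length $n$ (cycle product the nontrivial element of $\mathbb{Z}_2$, of order $2$) was assigned order $2n$.

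Granting this, I would next observe that a general element is a commuting product over the disjoint cycles, so its order is $\operatorname{lcm}_i\bigl(\lambda_i \cdot \mathrm{ord}(h_i)\bigr)$. A least common multiple of positive integers is a power of $p$ if and only if each of its arguments is, and $\lambda_i \cdot \mathrm{ord}(h_i) = p^{d_i}$ forces $\lambda_i$ and $\mathrm{ord}(h_i)$ to be powers of $p$ separately. Hence the classes counted by $\tilde{r}(p, G \wr S_n)$ are precisely those whose partition has all parts equal to powers of $p$ and each of whose decorations $[h_i]$ is one of the $\tilde{r}(p,G)$ conjugacy classes of $G$ of $p$-power order; note that the identity class is included here, matching the convention $d = 0$ built into $\tilde{r}$.

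This is exactly the data of a partition of $n$ into $p$-power parts in which every part additionally carries one of $\tilde{r}(p,G)$ colors. Such colored partitions have the familiar product generating function: for each size $p^j$ and each of the $\tilde{r}(p,G)$ colors, the corresponding part type may be used any number of times and contributes an independent geometric factor $1/(1 - x^{p^j})$. Collecting these gives
\[
\tilde{OGF}(p, G \wr A, x) = \prod_{j \geq 0} \frac{1}{(1 - x^{p^j})^{\tilde{r}(p,G)}} = \left( \prod_{j \geq 0} \frac{1}{1 - x^{p^j}} \right)^{\tilde{r}(p,G)} = \tilde{OGF}(p, A, x)^{\tilde{r}(p,G)},
\]
where the last equality is the Type $A$ lemma.

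The only genuinely non-routine input is the order formula $\ell \cdot \mathrm{ord}(h)$ for a decorated cycle, which I expect to be the main obstacle to make fully rigorous; once it is in hand, everything reduces to the standard translation of a colored-multiset count into a product of geometric series, and the factorization into $\tilde{OGF}(p,A,x)^{\tilde{r}(p,G)}$ is immediate.
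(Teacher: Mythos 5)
Your proposal is correct and follows essentially the same route as the paper: restrict to conjugacy classes of $p$-power order, observe that this forces a partition into $p$-power parts with each part decorated by one of the $\tilde{r}(p,G)$ allowed classes of $G$, and translate the resulting colored-partition count into the $\tilde{r}(p,G)$-th power of the type $A$ generating function. The only difference is that you spell out the order formula $\ell \cdot \mathrm{ord}(h)$ for a decorated cycle, which the paper's proof leaves implicit; that step is indeed correct and makes the argument more self-contained.
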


\begin{proof}
First we give a partition of $n$ and to have $p$ power order it must be into $p$ power parts. The conjugacy classes coloring the parts also have to be $p$ power to maintain this condition. That gives $\tilde{r}(p,G)$ chocies for the colors of the parts. This procedure is in bijection with counting the number of ways of dividing $n$ up into $\tilde{r}(p,G)$ parts and then each one of those gets the structure of a partition. The values of $\tilde{r}(p,G)$ of the binary Platonic groups were already given in \cref{Platonic}. Together on the generating function this amounts to taking the $\tilde{r}(p,G)$'th power.
\end{proof}

\begin{lemma}

\begin{eqnarray*}
F(p, G \wr A , s) &\equiv& \mathcal{M} ( \log \tilde{OGF}(p,G \wr A,e^{-t}))\\
F(p, G \wr A , s) &=& \tilde{r}(p,G) \Gamma(s)\frac{\zeta(s+1)}{1-p^{-s}}\\
\sum y^m F(p, G_m \wr A , s) &=& \tilde{OGF}(p,G_\bullet , y) \Gamma(s)\frac{\zeta(s+1)}{1-p^{-s}}\\
\end{eqnarray*}

\end{lemma}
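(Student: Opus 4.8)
The plan is to reduce all three lines to the Type A Mellin transform already computed, using the multiplicativity of $\tilde{OGF}$ under the wreath product established in the preceding corollary. The first line is merely the defining identity for $F(p, G \wr A, s)$ (it is written with $\equiv$), so there is nothing to prove there; the content lies in the second and third displayed equalities.

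First I would take the logarithm of the wreath-product generating function. By the corollary we have $\tilde{OGF}(p, G\wr A, x) = \tilde{OGF}(p, A, x)^{\tilde{r}(p,G)}$, so setting $x = e^{-t}$ and taking logarithms converts the exponent into a scalar multiple:
\[
\log \tilde{OGF}(p, G \wr A, e^{-t}) = \tilde{r}(p,G)\, \log \tilde{OGF}(p, A, e^{-t}).
\]
Since the Mellin transform $\mathcal{M}$ is linear, I would pull the constant $\tilde{r}(p,G)$ outside and invoke the Type A computation $F(p,A,s) = \Gamma(s)\zeta(s+1)/(1-p^{-s})$ to obtain
\[
F(p, G\wr A, s) = \tilde{r}(p,G)\, F(p, A, s) = \tilde{r}(p,G)\,\Gamma(s)\frac{\zeta(s+1)}{1-p^{-s}},
\]
which is the second line.

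For the third line I would let $G$ range over a sequence $G_\bullet$, weight the $m$-th term by $y^m$, and sum over $m$. Using the second line termwise and factoring the $m$-independent piece $\Gamma(s)\zeta(s+1)/(1-p^{-s})$ out of the sum gives
\[
\sum_m y^m F(p, G_m \wr A, s) = \Bigl(\sum_m \tilde{r}(p, G_m)\, y^m\Bigr)\Gamma(s)\frac{\zeta(s+1)}{1-p^{-s}},
\]
and the inner sum is exactly $\tilde{OGF}(p, G_\bullet, y)$ by its definition. This completes the identification.

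The only genuinely delicate point is analytic rather than combinatorial: one must verify that the relevant transforms converge on a common vertical strip and that interchanging the sum over $m$ with $\mathcal{M}$ is legitimate. Because $\log \tilde{OGF}(p, A, e^{-t})$ decays suitably as $t \to \infty$ and exhibits the expected mild blow-up as $t \to 0^{+}$, the Type A transform converges for $\Re(s)$ in a right half-strip, and the scalar factors $\tilde{r}(p, G_m)$ together with the formal parameter $y$ cause no trouble once $|y|$ is small enough for $\tilde{OGF}(p, G_\bullet, y)$ to converge absolutely. I expect this Fubini-type interchange to be the main (and essentially only) obstacle; everything else follows immediately from linearity of $\mathcal{M}$ and the multiplicativity corollary.
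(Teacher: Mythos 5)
Your proposal is correct and follows essentially the same route as the paper's proof: take the logarithm of the wreath-product identity $\tilde{OGF}(p,G\wr A,x)=\tilde{OGF}(p,A,x)^{\tilde{r}(p,G)}$ to pull out the scalar $\tilde{r}(p,G)$, use linearity of $\mathcal{M}$, and substitute the Type A formula $F(p,A,s)=\Gamma(s)\zeta(s+1)/(1-p^{-s})$; the third line then follows by summing termwise, exactly as you do. Your added remarks on convergence and the Fubini interchange go beyond what the paper records but do not change the argument.
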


\begin{proof}

\begin{eqnarray*}
F(p, G \wr A, s) &\equiv& \mathcal{M} ( \log \tilde{OGF}(p,A,e^{-t})^{\tilde{r}(p,G)})\\
&=& \tilde{r}(p,G) \mathcal{M} ( \log \tilde{OGF}(p,A,e^{-t})) = \tilde{r}(p,G) F(p,A,s)\\
F(p, A,s) &=& \Gamma(s)\sum_{j\ge 0, k \ge 1} \frac{(kp^j)^{-s}}{k} = \Gamma(s)\frac{\zeta(s+1)}{1-p^{-s}}\\
F(p, G \wr A ,s) &=& \tilde{r}(p,G) \Gamma(s)\frac{\zeta(s+1)}{1-p^{-s}}\\
\end{eqnarray*}

\end{proof}

\section{Conclusion}

\par We have applied the theorem of L{\"u}ck to compute $K^0 (BG)$ for cases of finite groups that are relevant to Platonic solids, Weyl groups and Hilbert schemes. When the groups come in a natural countable family, we may form generating functions akin to $\eta (q)$ and partitions.\if\doAlgSec1 There is also a similar algebra structure to Nakajima's construction of quantum affine algebras.\fi

These were example computations that leave many questions raised. These are driven by understanding the relations between the algebraic and topological K-theories of $\mathbb{C}^2/G$. This goes into what is lost and what is kept by the comparison map \cite{Gillet}. There is also the distinction between genuine equivariant K theory and the Borel equivariant K theory that we have considered here. In addition, conjugacy classes in Weyl groups are related to nilpotent coadjoint orbits/W-algebras via \cite{KazhdanLusztig,Sevostyanov} so we can translate the prime power conditions there too. 

\bibliographystyle{ieeetr}
\bibliography{DiscreteGroupKTheory}

\end{document}